\numberwithin{equation}{section}
\numberwithin{figure}{section}
\newcommand*{\doi}[1]{\href{http://dx.doi.org/\detokenize{#1}}{doi}}
\newcommand{\Z}{\mathbb{Z}}
\newcommand{\R}{\mathbb{R}}
\newcommand{\W}{\mathcal{W}}
\renewcommand{\t}{{\textnormal{\emph{\textbf t}}}}
\newcommand{\x}{{\textnormal{\emph{\textbf x}}}}
\newcommand{\y}{{\textnormal{\emph{\textbf y}}}}
\newcommand{\z}{{\textnormal{\emph{\textbf z}}}}
\renewcommand{\d}{{\textnormal{\emph{\textbf d}}}}
\newcommand{\one}{\mathbf{1}}
\newcommand{\I}{\mathbf{1}}
\newcommand{\cP}{\mathcal{P}}
\newcommand{\B}{\mathcal{B}}
\theoremstyle{plain}
\newtheorem{theo}[equation]{Theorem}
\newtheorem{lemma}[equation]{Lemma}
\newtheorem{prop}[equation]{Proposition}
\theoremstyle{definition}
\theoremstyle{remark}
\newtheorem*{claim*}{Claim} 
\newtheorem*{remark*}{Remark}
\renewcommand{\geq}{\geqslant}
\renewcommand{\ge}{\geqslant}
\renewcommand{\leq}{\leqslant}
\renewcommand{\le}{\leqslant}
\renewcommand{\subset}{\subseteq}
\renewcommand{\eqref}[1]{(\ref{#1})}
\begin{document}

\title{Slow-to-Start Traffic Model: \\ Condensation, Saturation and Scaling Limits}
\author{Pablo A. Ferrari and Leonardo T. Rolla \\ {\small University of Buenos Aires}}

\maketitle

\begin{abstract}
We consider a one-dimensional traffic model with a slow-to-start rule.
The initial position of the cars in $\R$ is a Poisson process of parameter $\lambda$.
Cars have speed 0 or 1 and travel in the same direction.
At time zero the speed of all cars is 0; each car waits an exponential time to switch speed from $0$ to $1$ and stops when it collides with a stopped car.
When the car is no longer blocked, it waits a new exponential time to assume speed one, and so on.
We study the emergence of condensation for the saturated regime $\lambda>1$ and the critical regime $\lambda=1$, showing that in both regimes all cars collide infinitely often and each car has asymptotic mean velocity $1/\lambda$.
In the saturated regime the moving cars form a point process whose intensity tends to 1.
The remaining cars condensate in a set of points whose intensity tends to zero as $1/\sqrt t$.
We study the scaling limit of the traffic jam evolution in terms of a collection of coalescing Brownian motions.
\end{abstract}

\section{Introduction}

We consider a system of cars that move from right to left in $\R$.
The initial
car positions form a homogeneous Poisson process of parameter $\lambda$ on $\R$.
Cars are labeled in increasing order.
Each car has two possible
velocities, either $0$ or $1$, and they cannot overpass.
Initially all cars have
speed 0.
For each $i\in\Z$, car $i$ waits an exponentially distributed random
time and then switches speed from $0$ to $1$.
It keeps this speed until it possibly
collides with car $i-1$.
At this moment, car $i$ is blocked, and remains so until car $i-1$ leaves this position.
After the departure of car $i-1$, car $i$
waits a new exponential time to depart.
This model was proposed in~\cite{CaceresFerrariPechersky07}, as a simplification of more complete
slow-to-start rules discussed in~\cite{GrayGriffeath01,GrigorescuKangSeppaelaeinen04,SopasakisKatsoulakis06} and references
therein.
Shneer and Stolyar~\cite{ShneerStolyar19} discuss stability issues of a discrete version of this process.
We prove rigorous results seemingly unattainable for the more complete
rules.

\paragraph{Informal description of results}


Saturated regime $\lambda>1$.
All the cars collide infinitely often and each car has asymptotic mean velocity $1/\lambda$.
The set of positions of moving cars converge to a Poisson process of parameter 1.
Any two fixed cars will be either both stopped at the same traffic jam or both moving, with probability tending to 1 in time.
The distances between consecutive traffic jams at time $t$, as well as and their
size, are of order $\sqrt t$.
In the rescaled process, each traffic jam corresponds to a double point at time
zero in the coalescing Brownian process of Arratia~\cite{Arratia79}, while the
size of the traffic jam is the distance at time $t$ between the
two Brownian motions starting at the double point.

Critical regime $\lambda =1$.
The set of positions of moving cars converge to a Poisson process of parameter 1.
Each car has asymptotic mean velocity $1$, but it collides infinitely often.
The system shows reminiscent condensation: the probability that a car is stopped is vanishing, but yet for any two fixed cars, the conditional probability that they are at the same traffic jam given that one
of them is stopped tends to $1$.
The distance between two traffic jams at time
$t$ is of order $t$ and their sizes are of order $\sqrt t$.
In the scaling limit, the evolution of the traffic jam configuration is not Markovian.

The unsaturated regime $\lambda<1$ has been considered in~\cite{CaceresFerrariPechersky07}, where it was shown that the configuration of moving cars converges to a Poisson Point Process with intensity $\lambda$.

\paragraph{The model}

The process is denoted by $(\pi,v)=((\pi(t),v(t));\,t\ge 0)$, where $\pi(t)=(\pi_i(t); i\in \Z)$ and $v(t)=(v_i(t);i\in \Z)$.
For each $i$, $\pi_i(t)\in\R$ represents the position of car $i$ at time $t$ and $v_i(t)\in\{0,1\}$ its speed.
The initial car positions are given by $\y_0 = \{y_i:i\in\Z\}\subseteq\R$, a Poisson process with intensity
$\lambda$ whose points are labeled so that $\cdots <y_{-2}<y_{-1}<y_0 \leqslant 0<y_1<y_2<
\cdots$.
The trajectories are defined by
\begin{equation}
\label{a10}
\begin{cases}
\pi_i(0)=y_i;\\
v_i(0)=0; \\ 
\dot\pi_i(t) = -v_i(t) \text{ for a.e. } t \\
v_i(t) $ jumps from $0$ to $1$ at rate 1 if $ \pi_i(t)>\pi_{i-1}(t)$;$ \\
v_i(t) = 0$ if $\pi_i(t)= \pi_{i-1}(t)
\end{cases}
\end{equation}
and the underlying probability measure will be denoted $P$.

\paragraph{Local behavior}


We start by describing the process as seen from a fixed observer.

Let $\y^1(t)=\{\pi_i(t):v_i(t)=1,\,i\in\Z\}$ be the set of positions
of the moving cars at time $t$ and let
$\y(t)=\{\pi_i(t):v_i(t)=0,\,i\in\Z\}$ be the set of positions of the
stopped cars, or \emph{traffic jams}.
Then the positions of the moving
cars converge to a Poisson process with a density $\min\{\lambda,1\}$,
while the places with traffic jams disappear.
\begin{prop}
\label{prop:satcond}
The set of traffic jams vanishes:
\[
\y(t) \ \stackrel{\rm a.s.}\longrightarrow \ \emptyset.
\]
Moreover,
\[
\y^1(t) \ \stackrel{\rm d}\longrightarrow \ {\rm Poisson}(\min\{1,\lambda\})
\]
as $t\to\infty$,
where $\stackrel{\rm d}\longrightarrow$ means convergence in distribution.
\end{prop}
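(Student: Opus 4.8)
The plan is to use translation invariance to turn both assertions into statements about the law of a stationary point process, to identify the moving cars as the output of a (saturated) queue, and to control the \emph{number of distinct jam sites} through a mass-balance and coalescence argument. First I would record that, since the initial law ${\rm Poisson}(\lambda)$ and the dynamics \eqref{a10} are invariant under spatial translations, the random sets $\y(t)$ and $\y^1(t)$ are stationary point processes on $\R$ for every fixed $t$. Writing $\rho^0(t)$ and $\rho^1(t)$ for the intensities of stopped and moving cars, conservation of cars gives $\rho^0(t)+\rho^1(t)=\lambda$ for all $t$. Thus it suffices to prove $\rho^1(t)\to\min\{1,\lambda\}$ and, for the convergence in distribution, to compute the limiting Laplace functional of $\y^1(t)$; the vanishing of $\y(t)$ is a distinct statement about the \emph{support} of $\y(t)$, i.e.\ about the intensity of distinct jam sites, not of stopped cars.

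Next I would analyse the emission of cars from a single jam. A jam located at $x$ releases its front (lowest-indexed) car the instant that car switches to speed $1$, which by \eqref{a10} happens at rate $1$; since the departing car instantly opens a gap, the next car is immediately unblocked, so a non-empty jam emits cars to its left according to a rate-$1$ Poisson clock. Each emitted car then travels at speed $1$ until it reaches the next jam, so the moving cars occupying the region between two consecutive jams are, up to the distortion caused by absorption at the downstream jam, a Poisson process of intensity $1$ (rate $1$ in time $\times$ speed $1$ in space). In the saturated and critical regimes the upstream jam stays non-empty throughout the relevant window, so the emission is not thinned and the gap carries intensity exactly $1$; when $\lambda<1$ the queues are subcritical and empty out, and one instead recovers the result of \cite{CaceresFerrariPechersky07} that all cars eventually flow freely at density $\lambda$. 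A Burke-type reversibility argument, or the explicit invariant measure of the associated tandem of queues, would then upgrade ``intensity $1$'' to ``${\rm Poisson}(1)$'' by showing the emission is genuinely Poisson and that successive inter-jam regions decouple as $t\to\infty$.

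For the vanishing of $\y(t)$ I would argue through the jam sites rather than the stopped cars. Mass balance already shows the density of stopped cars tends to $(\lambda-1)^+$, which is positive when $\lambda>1$; the crucial point is that these cars concentrate in fewer and fewer jams. A jam that loses cars faster than it gains them empties and disappears, its would-be arrivals being inherited by the next downstream jam, so the distinct jam sites coalesce and their intensity tends to $0$ (at rate $t^{-1/2}$ in the saturated regime). This gives $E\bigl[\#(\y(t)\cap I)\bigr]\to 0$ for every bounded interval $I$, hence $\y(t)\cap I=\emptyset$ with probability tending to $1$. Promoting this to the almost sure statement requires the quantitative $t^{-1/2}$ rate, a Borel--Cantelli argument along a suitable sequence of times $t_n$, and a continuity estimate controlling the creation of new jams in $I$ between consecutive $t_n$.

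The hard part is the genuine Poisson limit for $\y^1(t)$, as opposed to the mere identification of its intensity: one must show that the emission from each jam is asymptotically an \emph{independent} rate-$1$ Poisson process and that the long-range correlations between successive inter-jam gaps wash out, which is exactly where the reversibility / invariant-measure input is indispensable and where I expect the main technical work to lie. A secondary difficulty is the promotion of the vanishing of jam sites from convergence in probability to the almost sure statement, which hinges on the explicit decay rate of the jam-site intensity.
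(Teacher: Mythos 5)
Your plan has the right queueing intuitions, but two of its load-bearing steps are either asserted or justified by a mechanism that fails in exactly the regimes the proposition is about. First, for the vanishing of $\y(t)$ you write that ``a jam that loses cars faster than it gains them empties and disappears,'' and from this you infer that the jam-site intensity tends to zero. For $\lambda\ge 1$ a jam fed by a saturated upstream jam receives cars at rate $1$ and emits them at rate $1$, so its size is a zero-drift (null-recurrent) walk: it does \emph{not} lose cars faster than it gains them, and it empties a.s.\ by recurrence, not by drift. The paper makes this precise via the coalescing-walk coupling of \S\ref{sec:coupling}: by~\eqref{eqjams} the jam at $y_i$ has size $B^{i+1}_{y_i+t}-B^i_{y_i+t}$, the difference of two rate-$1$ coalescing reflected counting processes, which hits zero a.s.; a Palm argument then covers every jam. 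You also miss the monotonicity $\y(t')\subseteq\y(t)$ for $t'>t$ (no new jams can form, since all moving cars have the same speed), which makes your entire programme of establishing a $t^{-1/2}$ intensity decay, running Borel--Cantelli along a subsequence, and controlling jam creation between sampling times unnecessary: once each individual jam disappears in a.s.\ finite time, the a.s.\ statement is immediate. As written, your route both asserts the intensity decay you would need (it is not derived from anything) and solves a harder problem than required.

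Second, for the Poisson limit you correctly identify the independence/decoupling across inter-jam gaps as ``the hard part'' and then leave it to an unspecified Burke-type argument. The paper avoids this entirely by combining the two halves of the proposition: once part one guarantees that with high probability there are no jams left in $[-2K,0]$ at time $t-2K$, the spatial configuration $\y^1(t)\cap[-2K,0]$ is exactly the set of times in $[t-2K,t]$ at which a car crosses the origin, shifted by $-t$; and that temporal crossing process is known from~\cite{CaceresFerrariPechersky07} to be the departure process of a single $M/M/1$ queue with arrival rate $\lambda$ and service rate $1$, which converges to ${\rm Poisson}(\min\{\lambda,1\})$. Your local ``emission from each jam is a rate-$1$ Poisson clock'' picture is morally the same saturated-server output, but without the space--time identification above you are left having to prove asymptotic independence of the emissions from distinct jams and to handle absorption at downstream jams, neither of which your sketch addresses. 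So the proposal, as it stands, has genuine gaps at both of its critical junctures.
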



The next result concerns the behavior of individual cars.

\begin{prop}
\label{prop:speed}
The mean velocity of each car $i$ satisfies
\[
\frac{- \pi_i(t)}t \ { \mathop {\longrightarrow}^{\rm{a.s.}}_{ t\to\infty} }\
\min\{\tfrac{1}{\lambda},1\}.
\]
\end{prop}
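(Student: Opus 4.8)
The plan is to pin down the limit by comparing two different ways of counting cars --- a temporal count (the current/flux across a fixed point) and a spatial count (the density in a window). Write $\phi=\min\{1,\lambda\}$. For a fixed label $i$ the trajectory $\pi_i(t)$ is nonincreasing and tends to $-\infty$, so $\pi_i(t)<0$ for all large $t$. Let $C(t)=\max\{j:\pi_j(t)\le 0\}$ be the label of the rightmost car that has already reached $(-\infty,0]$; since cars keep their order and only move leftwards, $C(t)$ is exactly the number of cars that have crossed the origin by time $t$. Because the cars with $\pi_j(t)\ge\pi_i(t)$ are precisely those with $j\ge i$, the number of cars lying in the window $[\pi_i(t),0]$ equals
\[
N(t):=\#\{j:\pi_i(t)\le\pi_j(t)\le 0\}=C(t)-i+1 .
\]
Since $-\pi_i(t)$ is the length of that window, the statement reduces to two asymptotics: the flux $C(t)/t\to\phi$ and the density $N(t)/(-\pi_i(t))\to\lambda$. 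Combining them with the displayed identity gives $\lambda(-\pi_i(t))\sim N(t)=C(t)-i+1\sim\phi t$, i.e. $-\pi_i(t)/t\to\phi/\lambda=\min\{1/\lambda,1\}$, uniformly across all three regimes.

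For the flux, only moving cars cross the origin, and a moving car does so at speed $1$; hence the instantaneous rate of crossings at time $s$ equals the intensity of the moving-car process $\y^1(s)$ at $0$. By Proposition~\ref{prop:satcond} this intensity converges to $\phi$ as $s\to\infty$, so a Campbell/Cesàro computation gives $E[C(t)]/t\to\phi$. To upgrade this to the almost sure statement $C(t)/t\to\phi$ one uses that the crossings form a counting process with asymptotically stationary increments driven by an ergodic environment, so that a law of large numbers applies.

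For the density I would use translation invariance. The map sending the initial configuration to the configuration at time $t$ commutes with spatial shifts, and the initial law is a homogeneous Poisson process, which is shift-invariant and ergodic; hence for each fixed $t$ the unlabelled point process $\{\pi_j(t):j\in\Z\}$ is shift-invariant with intensity $\lambda$ and is ergodic. The ergodic theorem then gives $\#\{j:\pi_j(t)\in[-L,0]\}/L\to\lambda$ a.s. as $L\to\infty$, which, with $L=-\pi_i(t)$, is the second required asymptotic.

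The main obstacle is precisely this last substitution: the endpoint $-\pi_i(t)$ is random and coupled to the very process whose density is being averaged, and the limits $L\to\infty$ and $t\to\infty$ are taken simultaneously. I would handle this by a subsequence/sandwiching argument: if $-\pi_i(t)/t\to c$ along some sequence, then $[\pi_i(t),0]$ is within $o(t)$ of the deterministic window $[-ct,0]$, so a spatial law of large numbers for a deterministic window of length $\sim ct$ forces $N(t)\sim\lambda c t$; comparison with $N(t)\sim\phi t$ yields $c=\phi/\lambda$, identifying the unique limit. The real work is in establishing the density estimate uniformly enough in $t$ (a quantitative concentration of $N(t)$ about $\lambda\,(-\pi_i(t))$, not just the bare ergodic theorem); the mixing inherited from the Poisson initial data and the local, finite-speed nature of the interaction should supply the needed control.
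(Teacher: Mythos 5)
Your route --- identify the asymptotic flux of cars across the origin, identify the asymptotic density of cars in the window $[\pi_i(t),0]$, and divide --- is genuinely different from the paper's. The paper works directly on the coalescing-walk representation of \S\ref{sec:coupling}: under the Palm measure, car $0$ occupies $y_{-n}$ up to time $T(-n,0)-y_{-n}$, so the velocity is $\lim_n -y_{-n}/(T(-n,0)-y_{-n})$, and everything reduces to comparing the rate-$1$ reflected walk $B^{-n}$ with the rate-$\lambda$ reflecting wall $Y$ in the three regimes; the only inputs are laws of large numbers for the initial Poisson process and for a reflected random walk. Your scheme is sound in outline and the final algebra $\min\{1,\lambda\}/\lambda=\min\{1/\lambda,1\}$ is correct, but as written it rests on two lemmas that are asserted rather than proved, and those two lemmas are where all the probabilistic content lives.

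First, the flux. Proposition~\ref{prop:satcond} gives convergence in distribution of $\y^1(t)$, which (even granting the intensity computation) yields at best $E[C(t)]/t\to\min\{1,\lambda\}$; ``a counting process with asymptotically stationary increments driven by an ergodic environment'' is not a theorem you can invoke, and the increments of $C$ are neither stationary nor independent. The clean repair is the $M/M/1$ representation already used for Proposition~\ref{prop:satcond}: the crossing times of the origin are the departure times of an $M/M/1$ queue with arrival rate $\lambda$ and service rate $1$, so $C(t)=A(t)-Q(t)$ with $A$ Poisson$(\lambda)$ and $Q(t)/t\to(\lambda-1)^{+}$ a.s., whence $C(t)/t\to\min\{1,\lambda\}$ a.s. Second, and more seriously, the density along $[\pi_i(t),0]$. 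You correctly call this ``the real work'' and then leave it undone: the spatial ergodic theorem applied at each fixed $t$ produces one null set per $t$ and says nothing about the joint limit in which $t\to\infty$ and the window length $L=-\pi_i(t)\to\infty$ simultaneously, with $L$ measurable with respect to the very configuration being averaged. Some uniform-in-$t$ concentration estimate (or a second flux argument at the moving left endpoint, which itself requires uniformity of the flux LLN over $O(t)$ many reference positions) is indispensable, and nothing in the proposal supplies it. Until that estimate is in place the argument is a plausible strategy rather than a proof; the paper's representation sidesteps the issue entirely because $-\pi_0(t)$ is read off as $-y_{-n}$, a functional of the initial Poisson process alone, for which the needed law of large numbers is immediate.
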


\paragraph{Scaling limits}
We now consider the traffic jams and discuss the scaling limits in the saturated and critical regimes.
Recall $\y(t) $ is the set of traffic jams at time $t$.
Since all cars move at the same
speed, new traffic jams cannot appear and we have $\y(t')\subseteq\y(t)$ for
$t'>t$.
For the traffic jam $y\in\y(t)$ let $N_t(y):=\sum_i \I\{\pi_i(t)=y;
v_i(t)=0\}$ be the number of cars at $y$ at time $t$.
Then for each $t$,
$(\y_t,N_t)$ is a marked point process.

The scaling limit for $\lambda>1$ is a Markov process associated to a system of
coalescing Brownian motions with masses.
At each point $z\in\R$, start a
standard one-dimensional Brownian motion $(B^z_s)_{s\geqslant0}$, all motions
being independent of each other before they meet, and coalescing thereafter; see
Arratia~\cite{Arratia79} for a construction of the process.
At a given time
$t>0$, the set of positions $\z_t=\{z:B^{z+}_t > B^{z-}_t\}$ is a discrete
subset of $\R$.
Take $m_t:\z_t\to\R_+$, where $m_t(z)=B^{z+}_{t}-B^{z-}_{t}>0$
is the mass associated to site $z$.
For each fixed time $t$, $(\z_t,m_t)$ is a
marked point process.

\begin{theo}
\label{theo:1}
Let $\lambda>1$ and, for each $L>0$, rescale $(\y_t,N_t)$ by defining ${\y_t^L}
= \frac{\lambda-1}{\sqrt L} \y_{Lt}$ and $N_t^L(y)=\frac{1}{\sqrt L}
N_{Lt}\bigl(\frac{\sqrt L}{\lambda-1}\, y\bigr)$.
Then, as $L\to\infty$,
\begin{equation}
\label{eq:superrescaling}
\big({\y_t^L},N_t^L\big)_{t>0} \ \stackrel {\rm d} \longrightarrow\ 
\bigl(
\z_t,m_t
\bigr)_{t>0}
\end{equation}
in the sense of compact restrictions of finite-dimensional time projections.
\end{theo}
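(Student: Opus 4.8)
The plan is to exploit the fact that, since no new jams are created and all cars share a common speed, each jam sits at a \emph{fixed} location in $\R$ throughout its life, so that the only nontrivial dynamics is that of the masses $N_t(\cdot)$ and of the deaths of jams. First I would describe this dynamics explicitly. Label the live jams by increasing position $\cdots<y_{k-1}<y_k<y_{k+1}<\cdots$. The front (leftmost) car of $y_k$ is never blocked, since the next jam ahead is strictly to its left, so it switches to speed $1$ at rate $1$; each such departing car travels ballistically and joins the next jam ahead, $y_{k-1}$. Hence $y_k$ loses mass at rate $1$ and gains mass exactly at the instants at which $y_{k+1}$ emits a car, delayed by the travel time between the two jams. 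Writing $D_t(y_k)$ for the number of departures from $y_k$ up to time $t$, this gives the conservation relation
\[
N_t(y_k)\;=\;N_0(y_k)+D_t(y_{k+1})-D_t(y_k)+(\text{delay correction}),
\]
so that, setting $\Phi_t(y_k):=\sum_{j<k}N_0(y_j)+D_t(y_k)$, the masses are consecutive gaps, $N_t(y_k)=\Phi_t(y_{k+1})-\Phi_t(y_k)$. A jam dies precisely when its mass reaches $0$, i.e. when $\Phi_t(y_k)$ meets $\Phi_t(y_{k+1})$; after this the two are identified and $y_{k+1}$ begins feeding $y_{k-1}$. The clocks $D_t(y_k)$ are independent rate-$1$ Poisson processes across $k$ (distinct cars), so the $\Phi_t(y_k)$ form a system of independent \emph{coalescing} walks, and the marked process $(\y_t,N_t)$ is read off as the domain labels and the gaps of this system.

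The second step is an invariance principle. Each $\Phi_t(y_k)$, centred by its common drift $t$ and rescaled diffusively, is a centred rate-$1$ Poisson process, hence converges to a standard Brownian motion; the whole coalescing family converges to Arratia's coalescing flow~\cite{Arratia79}, and the gaps converge to $m_t(z)=B^{z+}_t-B^{z-}_t$. I would verify the three standard criteria for convergence of coalescing walks to the Brownian web (finite-dimensional convergence of a single walk, tightness, and the bound on the density of coalescence points), which simultaneously deliver the joint convergence of all finite-dimensional time projections on compact spatial windows, i.e. exactly the asserted mode of convergence. The prefactor $\lambda-1$ is pinned down here: by Proposition~\ref{prop:satcond} the moving cars have asymptotic density $1$, so the excess density $\lambda-1$ is carried by the jams and the cumulative initial mass up to position $y$ is $\approx(\lambda-1)\,y$; thus $\Phi_0(y_k)\approx(\lambda-1)y_k$, and scaling positions by $\tfrac{\lambda-1}{\sqrt L}$ while scaling masses by $\tfrac{1}{\sqrt L}$ is exactly what makes the rescaled initial image coincide with the domain, so that the limit is the \emph{standard} Arratia flow issued from every point of $\R$.

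The hard part will be the two estimates that make this heuristic exact. The first is controlling the travel-time delays: at time $Lt$ the typical jam spacing, and hence the time a car spends in transit, is of order $\sqrt L$, which is negligible against the macroscopic time scale $L$ but is comparable to the masses themselves, so I must show that the reservoir of in-transit cars does not perturb the gap dynamics to leading order, e.g. by a flux comparison bounding the discrepancy between arrivals at $y_k$ and departures from $y_{k+1}$ over macroscopic time intervals. The second, more delicate, point is the density of jams near time zero: Arratia's flow issues a trajectory from every real point, forcing the double-point density to blow up like $1/\sqrt t$ as $t\to0$, and I must show that the organisation of the all-stopped initial datum into jams reproduces this behaviour, so that the rescaled small-time jam field matches the time-zero Arratia configuration; this is precisely the density criterion of the Brownian-web convergence and is what guarantees the limit is the full flow rather than a strict sub-family. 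Granting these two inputs, the coalescing-walk representation together with the invariance principle yields the joint convergence of $(\y^L_t,N^L_t)$ to $(\z_t,m_t)$ at finitely many times on compact spatial windows, which is the assertion.
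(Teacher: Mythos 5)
Your representation is essentially the one the paper uses: your $\Phi_t(y_k)$, once the in-transit cars are subtracted (equivalently, once the linear drift is removed, as the paper does in passing from $B^i_s$ to $B^i_s-s$), is exactly the coalescing counting process $B^i$ of \S\ref{sub:coalesce}, whose consecutive gaps give the jam sizes via \eqref{eqjams}; and your identification of the spatial scale $\tfrac{\lambda-1}{\sqrt L}$ via the excess density is the same computation the paper carries out when finding the scaling limits. Two caveats on this part. First, as written the identity $N_t(y_k)=\Phi_t(y_{k+1})-\Phi_t(y_k)$ is false: the gap of your $\Phi$ overcounts $N_t(y_k)$ by the number of cars in transit between the two jams, which is of order $\sqrt L$ --- the \emph{same} order as the masses --- so the ``delay correction'' is a leading-order term, not a perturbation; it must be shown to concentrate around $(y_{k+1}-y_k)$ times the moving-car density (which tends to $1$ by Proposition~\ref{prop:satcond}) with fluctuations $o(\sqrt L)$. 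You flag this, and it is exactly what the paper's drift subtraction accomplishes structurally, so I regard it as a genuine to-do rather than an error. Second, the cumulative initial mass up to position $y$ is $\approx\lambda y$, not $(\lambda-1)y$; the factor $\lambda-1$ only appears after that same subtraction.

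The genuine gap is in the last step. Convergence of the rescaled coalescing walks to Arratia's flow in the topology of~\cite{FontesIsopiNewmanRavishankar04} does \emph{not} ``simultaneously deliver'' convergence of the marked point process $(\y^L_t,N^L_t)$ to $(\z_t,m_t)$. The map sending a collection of coalescing paths to the set of its double points marked by their separation at time $t$ is not continuous on the path space: a collection Hausdorff-close to the limit can carry spurious nearby double points of small mass, or split one mass into several. What is needed is the a.e.\ continuity of this readout map at realizations of the limiting web, restricted to masses at least $\varepsilon$ and to compact windows; this is precisely the content of Lemma~\ref{lemma:continuity}, whose proof (the box construction of \S\ref{sub:continuous}) constitutes the bulk of the paper's technical work. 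The paper proves it in the critical case and explicitly omits the supercritical analogue as a simpler variant (the reflecting line collapses to a vertical axis), but some version of it is indispensable: without the continuous-mapping step your argument establishes convergence of the path family, not the asserted convergence of the marked point processes.
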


As a side remark, the evolution of $(\z_t,m_t)_{t>0}$ is Markovian.
Also, for each $t>0$, the point process $\z_t$ is not Poisson~\cite{Arratia79}.

We describe now the limiting process in the critical case $\lambda=1$.
Let
$(W_s)_{s\in\R}$ be a standard Brownian motion defined for all positive and
negative times $s$ and with $W_0=0$.
To each $x\in\R$ associate a new
Brownian motion $B^x=(B^x_s)_{s\geqslant x}$ starting at time $x$ at position
$B^x_x=W_x$ and being reflected by $W_s$ from above, so that $B^x_s \leqslant
W_s$ for all $s\geqslant x$.
Also let the motions $B^x_\cdot$ evolve independently until they coalesce, that
is, for $x'\leqslant x'' \leqslant s'\leqslant s''$, $B^{x'}_{s'} =
B^{x''}_{s'}$ implies $B^{x'}_{s''} = B^{x''}_{s''}$.
Coalescence and reflection imply that $B^{x'}_s \leqslant B^{x''}_s$
for $x' \leqslant x'' \leqslant s$.
So both limits $B^{y\pm}_s$ as
$x'\uparrow y$ or $x''\downarrow y$ exist.
For $t>0$ let
\begin{equation}
\label{eq:masscontcrit}
\x_t=\{y\in\R:B^{y+}_{y+t}>B^{y-}_{y+t}\}
,
\quad
n_t(y)=B^{y+}_{y+t}-B^{y-}_{y+t}>0
,\
y \in \x_t
.
\end{equation}
Then $(\x_t,n_t)$ is a marked
point process for each $t>0$.
See Fig.~\ref{fig:critical-limit}.
From the picture the reader may notice that $(\x_t,n_t)_{t>0}$ is not Markovian.
We do not know whether $\x_t$ is Poisson.

\begin{figure}[tb]
\centering
\includegraphics[width=120mm]{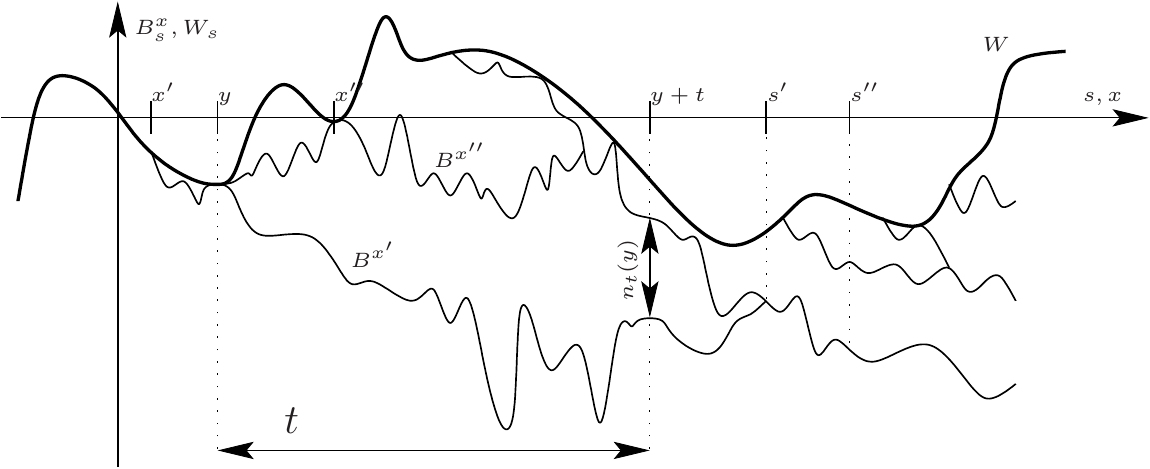}
\caption{Definition of $n_t(y)$ from coalescing reflected Brownian motions. Note that $n_s(y)>0$ for $s<s'$ and $n_s(y)=0$ for $s \geq s'$.}
\label{fig:critical-limit}
\end{figure}

\begin{theo}
\label{theo:scalingcritical}
Consider the critical case $\lambda=1$.
For each $L>0$, rescale $(\y_t,N_t)$ by defining
\begin{equation}
\label{eq:scaling}
{\y_t^L} = \frac{1}{L} \y_{Lt}
\quad
\text{ and }
\quad
N_t^L(y)=\frac{1}{\sqrt L} N_{Lt}(L \, y)
.
\end{equation}
Then, as $L\to\infty$,
\begin{equation}
\label{eq:sclimit}
\big({\y_t^L},N_t^L\big)_{t>0} \ \stackrel {\rm d} \longrightarrow\ 
(\x_t,n_t)_{t>0}
\end{equation}
in the sense of compact restrictions of finite-dimensional time projections.
\end{theo}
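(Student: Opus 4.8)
The plan is to reduce the statement to a joint invariance principle for finitely many ``jam-boundary'' trajectories, exactly as in the saturated case of Theorem~\ref{theo:1}, and then to identify the extra reflection structure that appears at criticality. Since the convergence in \eqref{eq:sclimit} is asserted only for compact spatial restrictions of finitely many time-projections, I would first fix times $0<t_1<\cdots<t_k$ and a compact window $[-a,a]$, and observe that — by Proposition~\ref{prop:satcond} together with the fact that $\y(t')\subseteq\y(t)$ for $t'>t$ — with probability tending to $1$ only finitely many traffic jams ever intersect the rescaled window at the times $Lt_1,\dots,Lt_k$. This lets me replace the whole configuration by a finite list of jams and control separately the cars entering or leaving the window through its boundary.

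Second, I would build everything from the graphical construction: the initial Poisson points together with the independent rate-$1$ release clocks. The structural facts to extract are (i) jams never appear and only coalesce, so the system is naturally encoded by a coalescing flow of microscopic trajectories indexed by the (rescaled) initial positions, in which a jam present at time $t$ corresponds to a maximal block of initial positions whose trajectories have merged; and (ii) the size $N_t(y)$ of a jam, equal to the gap between the two trajectories bounding such a block, evolves as a birth--death process — cars fed in from the right by the moving-car current and released one at a time from the front at rate $1$ — which at $\lambda=1$ has zero drift. Under the rescaling of positions by $1/L$ and sizes by $1/\sqrt L$, Donsker's theorem applied to these sums of i.i.d.\ gaps and exponential holding times yields convergence of finitely many trajectories of the flow to coalescing Brownian motions, with each block-gap converging to the candidate $n_t$. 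The ballistic motion of the bulk (mean velocity $1$ from Proposition~\ref{prop:speed}) is what produces the diagonal evaluation time $y+t$: a jam at rescaled position $y$ is fed by cars that, in characteristic coordinates, entered the jam system at rescaled time $y$, so that its size at rescaled time $t$ is read off at time $y+t$, exactly as in \eqref{eq:masscontcrit}.

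Third — and this is the heart of the matter — I would identify the common boundary $W$ and prove that the limiting trajectories are reflected below it and coalesce. The reflection is forced microscopically by the constraint $N_t(y)\ge 0$ together with conservation of cars: the free-flow front of the moving-car current, suitably centered, converges to the two-sided Brownian motion $W$, and a jam boundary can never overtake this front, which is the microscopic origin of $B^x_s\le W_s$. I would make this rigorous by writing the joint trajectory-plus-front process as a Skorokhod-type reflection of a random walk off its running extremum and invoking continuity of the Skorokhod reflection map, so that the reflected random walks converge to the reflected Brownian motions $B^x$; coalescence passes to the limit because distinct jam boundaries, once merged, stay merged, and the coalescing map of Arratia~\cite{Arratia79} is continuous off a null set. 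Because all the $B^x$ are reflected off the \emph{same} front $W$, the limit is genuinely non-Markovian, matching the assertion in the text.

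The main obstacle is precisely this third step: establishing the joint invariance principle for the jam boundaries together with the front $W$, and in particular proving that the reflection in the limit is off a single shared Brownian boundary rather than off independent per-jam barriers. Concretely, one must show that the centered current seen by different jams is asymptotically the \emph{same} Brownian motion (up to the time shift), which requires uniform control of the current fluctuations across the window and a proof that the birth--death sizes do not decouple from the front. Once the reflected, coalescing, time-shifted trajectories are shown to converge jointly with $W$, translating this into convergence of the marked point process $(\y_t^L,N_t^L)$ to $(\x_t,n_t)$ is routine: a rescaled jam is present at $(y,t)$ iff its two bounding trajectories have separated by time $y+t$, i.e.\ iff $B^{y+}_{y+t}>B^{y-}_{y+t}$, and its rescaled size converges to $n_t(y)$, which is exactly \eqref{eq:masscontcrit}. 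Tightness over the finitely many times and the compact window then upgrades finite-dimensional convergence to the claimed convergence in \eqref{eq:sclimit}.
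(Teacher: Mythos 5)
Your overall plan---encode the jams by coalescing trajectories, rescale diffusively, and identify a shared reflecting Brownian boundary---is in the right spirit, but it diverges from the paper at the two points that carry the actual weight, and at both points there is a genuine gap. First, you identify the shared boundary $W$ as ``the free-flow front of the moving-car current'' and propose to obtain the reflection via continuity of the Skorokhod map, while flagging yourself that proving all jams see the \emph{same} limiting front is ``the main obstacle''---and you do not resolve it. The paper dissolves this obstacle by construction: in \S\ref{sec:coupling} the jam sizes are written as $N_{y_i}(t)=B^{i+1}_{y_i+t}-B^{i}_{y_i+t}$, where the $B^i$ are \emph{space-indexed} counting processes, all reflected from above by the single path $Y$, namely the counting process of the initial Poisson configuration. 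At $\lambda=1$ this one path, centered and diffusively rescaled, is the Brownian motion $W$; no argument that different jams couple to a common current is needed, and no Skorokhod-map step appears, since the reflection is realized as the non-crossing of primal by dual paths in the double Brownian web of~\cite{FontesIsopiNewmanRavishankar04}.

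Second, and more seriously, you declare the passage from convergence of the trajectories to convergence of the marked point process $\big(\y^L_t,N^L_t\big)$ ``routine''. It is not: the map $\Psi$ sending a collection of paths to its set of separation points $(t,y,m)$ with $m=B^{y+}_{y+t}-B^{y-}_{y+t}$ is not continuous in general, and establishing its continuity at a.e.\ realization of $\W$ is the entire technical content of the paper's proof (\S\ref{sub:continuous}, Lemmas \ref{lemma:continuous}--\ref{lemma:continuity}). That argument needs the classification of point types (a jam location is a $(0,3)$ point of the web) and the construction of boxes $C_1,C_2,C_3,D_1,\dots,D_4$ guaranteeing that any good path collection $\W'$ close to $\W$ has a \emph{unique} nearby separation point with nearby mass---precisely the existence-and-uniqueness of $(y',m')$ that your last paragraph asserts without proof. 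Without this continuity statement (or an equivalent substitute), the convergence $\B^L \stackrel{\rm d}{\to} \W$ does not transfer to \eqref{eq:sclimit}, so the proposal as written does not close.
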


The main tool to prove the theorems is a representation of the motion using a
set of coalescing Poisson processes, described in \S\ref{sec:coupling}.

\section{Coupling with coalescing reflected walks}
\label{sec:coupling}

In this section we describe an explicit construction for the slow-to-start traffic model, show how it can be coupled with a collection of coalescing reflected paths, and how to read the model off such a collection.

\subsection{Explicit construction}

Recall that $\y\subset \R$ is the Poisson process of parameter $\lambda$ where the
cars are located at time zero; $y_i\in\y$ is the position of car $i$.

For each $i\in\Z$, let $\d_i\subseteq\R^+$ be a Poisson process of parameter 1 independent of each other and of $\y$.
Label these random sets by $\d_i=\{d_{i,j}\}_{j\geq i}$ with $0<d_{i,i}<d_{i,i+1}<\cdots$.
The value of $d_{i,j}$ will be the departure time of car $j$ from position $y_i$, in case car $j$ happens to stop at $y_i$ (otherwise $d_{i,j}$ plays no role in the construction).


For each $j \ge i$, we will denote by $A(i,j)$ the arrival time of car $j$ to position $y_i$ and by $D(i,j)$ the departure time of car $j$ from position $y_i$.
When $D(i,j)=A(i,j)$, it means that car $j$ does not stop at $y_i$.
From $\y$ and $\d$, these times are defined recursively as follows.

At step $0$, define $A(i,i) = 0$ and $D(i,i) = d_{i,i}$, for all $i$.
After step $k$, $A(i,j)$ and $D(i,j)$ have been defined for every pair such that $i \le j \le i+k$.
At step $k+1$, for each $i\in\Z$ write $j=i+k+1$ and define
\begin{align}
\label{eq:defa}
A(i,j) &= D(i+1,j) + y_{i+1}-y_{i}
,
\\
\label{eq:defb}
D(i,j) &=
\begin{cases}
A(i,j), & A(i,j)>D(i,j-1), \\
d_{i,j}, & A(i,j)<D(i,j-1).
\end{cases}
\end{align}
From these times we define the process $(\pi,v)$ by
\begin{equation}
\label{a90}
(\pi_j(t),v_j(t)) =
\begin{cases}
(y_i,0), & A(i,j) \le t < D(i,j), \\
(y_i-t+D(i,j),1), & D(i,j) \le t < A(i-1,j).
\end{cases}
\end{equation}

The size of the traffic jam at $y_i$ at time $t$ is
given by
\[
N_i(t) = \sum_{j:j\ge i} \one\{A(i,j) \le t < D(i,j)\}
\]
and the set of traffic jams at time $t$ is given by
\[
\y(t) = \{y_i\in \y\,:\, N_i(t)>0\}
.
\]
Fig.~\ref{fig:graph41} illustrates this construction. Between times $A(1,3)$ and $D(1,1)$ there are 3 cars in the traffic jam at position $y_1$. 

\begin{remark*}
This construction was used in~\cite{CaceresFerrariPechersky07} to study the case $\lambda<1$. The main observation there was that from the picture shown in Fig.~\ref{fig:graph41}, one can read a stable $M/M/1$ process on the $x$-axis from left to right, see~\cite{CaceresFerrariPechersky07} for the details. This is the reason why our cars move from right to left.
\end{remark*}

\begin{figure}[tb]
 \hspace*{\fill}\includegraphics[trim= 0 0 1cm 1mm, clip,width=\textwidth]{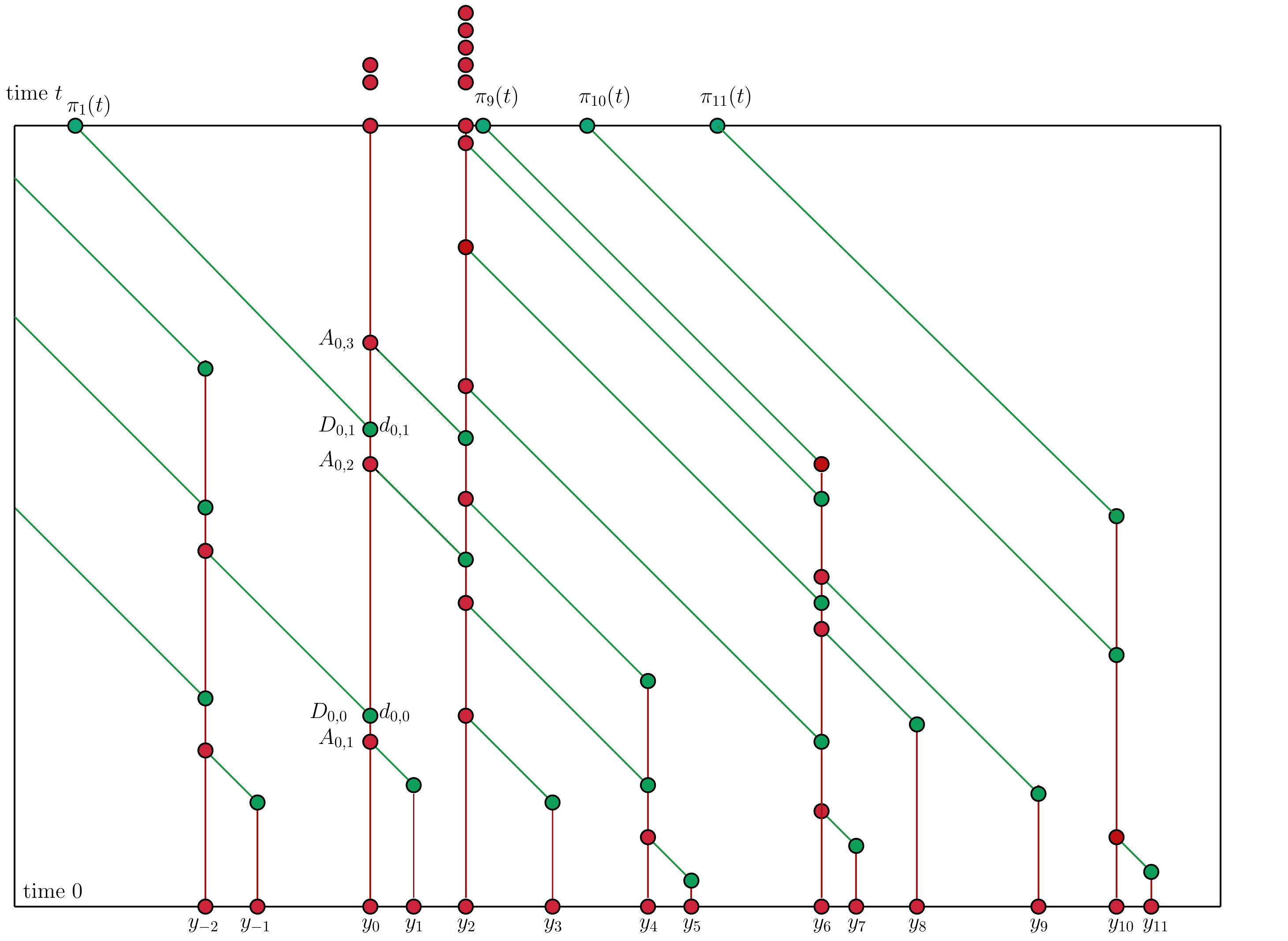}\hspace{\fill}
\caption{Graphical construction. Time is going up, cars are moving left. Red lines represent traffic jams, green lines are trajectories of moving cars. Red dots are either traffic jams at times 0 and $t$ or arrival times of new cars to traffic jams. Green dots represent departure times $D(i,j)=d_{i,j}$; we have not depicted those $d_{i,j}$ not becoming a car departure. At time $t$ there are traffic jams at $y_0$ with 2 stopped cars and at $y_2$ with 5 stopped cars while cars $1, 9, 10$ and $11$ are travellig at speed 1. Cars with label less than 1 are out of the picture at time $t$. }
\label{fig:graph41}
\end{figure} 

\subsection{Coalescing counting processes}
\label{sub:coalesce}

The Poisson process $\y$ induces a doubly infinite counting process
$(Y_x)_{x\in\R}$ defined by
\[
Y_x= i \text{ \ for \ } y_{i-1}\le x < y_i,
\] 
where we label $\y$ as $\y=\{y_i, \, i\in\Z\}$ with $y_0<0<y_1$ and $y_i<y_{i+1}$ for all $i$.
The process $Y$ jumps one unit up at each point $y_i\in\y$, so that $Y(y_i) = i+1$. This is the upper green path in the botton picture of Fig.~\ref{fig:graph-46}.


From $Y$ and $\d$, we define the counting processes $B^i=(B^i_x)_{x\in\R}$, $i\in\Z$, as follows.
First define
\begin{equation}
\label{eq:countbefore}
B^i_x = i \text{ for all } x < y_i
,
\quad
\text{ for all } i \in \Z
.
\end{equation}

Now let $i$ be fixed, and assume $B^{i+1}$ has been defined.
Let $B^i_x=i$ for $x \in [y_i,y_i + d_{i,i})$.
If $B^{i+1}_{y_i+d_{i,i}} = i+1$, let $B^i_x = B^{i+1}_x$ for all $x \geq y_i+d_{i,i}$, and the definition of $B^i$ is complete.
Otherwise define $B^i_x = i+1$ for $x \in [y_i+d_{i,i},y_i+d_{i,i+1})$.
If $B^{i+1}_{y_i+d_{i,i+1}} = i+2$, let $B^i_x = B^{i+1}_x$ for all $x \geq y_i+d_{i,i+1}$, and the definition of $B^i$ is complete.
Otherwise define $B^i_x = i+2$ for $x \in [y_i+d_{i,i+1},y_i+d_{i,i+2})$.
Continue this construction indefinitely.
In words, $B^i$ jumps by $+1$ at points $\{y_i+d_{i,j}\}_{j\geq i}$, but only until it meets $B^{i+1}$, and it follows $B^{i+1}$ after that (and $B^{i+1}$ is either jumping at points $\{y_{i+1}+d_{i+1,j}\}_{j\geq i+1}$ or following $B^{i+2}$, and so on).

To make this construction precise, we fix $x^* \in \R$ and restrict the above definition to $x < x^*$.
Once $x^*$ is fixed, there is $i^*$ such that $y_{i^*} > x^*$, whence $B^i$ is defined simply by~(\ref{eq:countbefore}) for all $i \ge i^*$.
Now $B^{i^*-1},B^{i^*-2},B^{i^*-3},\dots$ can be defined recursively following the rules just described.
Notice that this definition is consistent, i.e., for each $i$ and $x$ we obtain the same $B^i_x$ for all choices of $x^* > x$.

\begin{remark*}[The distribution of $Y$ and the $B^i$'s] 
The path $Y$ is a process that jumps at rate $1$.
Conditioned on $Y$, the $B^i$'s are distributed as a family of processes that start from $Y$, are reflected by $Y$ from above, and jump independently until they meet, after which they coalesce.
\end{remark*}

\subsection{An equivalent construction}

\begin{figure}[p]
\hspace*{\fill}\includegraphics[ trim= 0 0 0 0cm,clip, width=\textwidth]{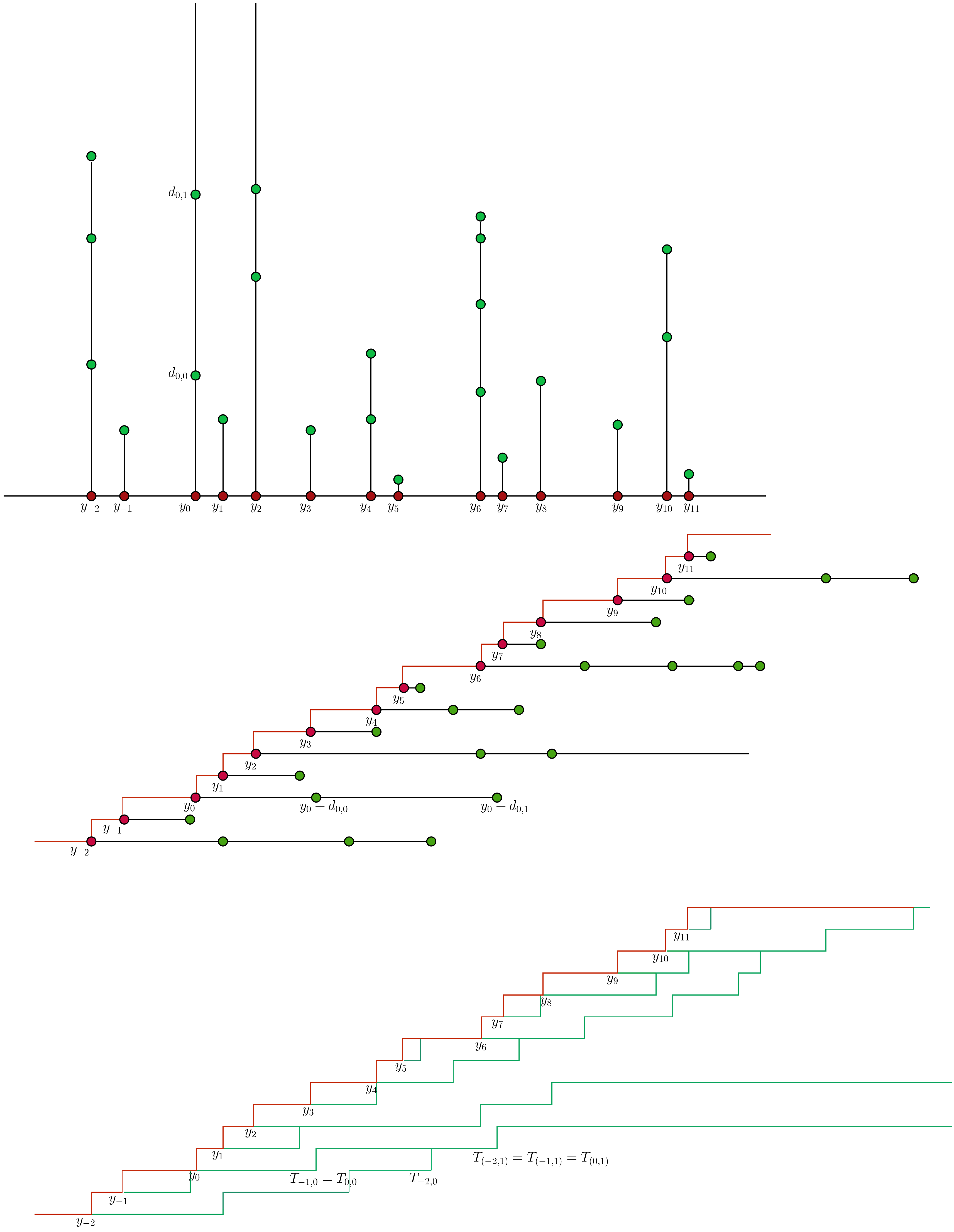}\hspace{\fill}
\caption{Coalescing random walks produced by the graphical construction of Fig.~\ref{fig:graph41}. The Poisson process $(d_{i,j})_{j\ge i}$ is depicted vertically in the upper figure and horizontally as $(y_i+d_{i,j})_{j\ge i}$ in the center figure; we have depicted only those $d_{i,j}$ used by $B^i$ before coalescing with $B^{i+1}$. The bottom figure uses $(y_i+d_{i,j})_{j\ge i}$ as counting coalescing processes. }
\label{fig:graph-46}
\end{figure} 

We now construct a version of the slow-to-start process $(\pi,v)$ in function of the coalescing reflected counting processes $(Y,B)$.

For $i\in\Z$ define $T(i,i-1)=y_i$ and for $j \ge i$ let $T(i,j)$ be the point when walk $i$ leaves level $j$:
\begin{equation}
\label{eqtij}
T(i,j):= \inf\{x \ge y_i \,:\, B^i_x=j+1\}
\end{equation}
In this way $T=(T(i,j))_{i\in\Z, j \ge i-1}$ is determined by $(Y,B)$.
Note that $B^i_x \le B^{i+1}_x$ for all $i$ and $x$, so $T(i+1,j) \le T(i,j) < T(i,j+1)$.

We define the trajectories of the cars in function of $T$ as follows.
For each
$j$, the initial position of car $j$ is $\pi_j(0)=y_j$.
For $i \le j$, car $j$ will arrive to $y_i$ at time $T(i+1,j)-y_i$ and leave $y_i$ at time
$T(i,j)-y_i$ (in the meantime, the speed of car $i$ is zero).
During the time interval $[T(i,j)-y_i,T(i,j)-y_{i-1})$ car $j$ travels at speed 1.
More precisely,
\begin{equation}
\label{eq:procfromjumps}
(\pi_j(t),v_j(t))=
\begin{cases}
(y_i,0)
,&
T(i+1,j)-y_i \le t < T(i,j)-y_i
,\\
(T(i,j) - t,1)
,&
T(i,j)-y_i \le t < T(i,j)-y_{i-1}
.
\end{cases}
\end{equation}
In case $T(i,j)=T(j+1,i)$, car $j$ does not stop at $y_i$. Fig.~\ref{fig:graph41} describes the construction of the coalescing random walks corresponding to the cars of Fig.~\ref{fig:graph-46}.

\begin{prop}
\label{a30}
The slow-to-start process $(\pi,v)$ defined in~(\ref{eq:procfromjumps}) is the same as the one defined by~\eqref{a90} almost surely.
The size of the traffic jam at $y_i$ at
 time $t$ is given by
\begin{equation}
\label{eqjams}
N_{y_i}(t) =
B^{i+1}_{y_i+t} - B^{i}_{y_i+t}
.
\end{equation}
\end{prop}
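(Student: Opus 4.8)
The plan is to reduce both assertions to a single dictionary relating the recursively defined times of \eqref{eq:defa}--\eqref{eq:defb} to the first-passage times \eqref{eqtij} of the walks, namely
\[
D(i,j)=T(i,j)-y_i
\quad\text{and}\quad
A(i,j)=T(i+1,j)-y_i\ (j>i),
\]
for all $i\in\Z$ and $j\ge i$, with $A(i,i)=0$ coming from the initial condition. I would prove this by induction on the generation $k=j-i$, following exactly the recursion that defines $A,D$. Granting the dictionary, the equality of \eqref{a90} and \eqref{eq:procfromjumps} is a line-by-line substitution: the stopped interval $[A(i,j),D(i,j))$ becomes $[T(i+1,j)-y_i,T(i,j)-y_i)$, on the moving interval one has $y_i-t+D(i,j)=T(i,j)-t$, and the next arrival $A(i-1,j)=T(i,j)-y_{i-1}$ supplies the upper endpoint $T(i,j)-y_{i-1}$; the birth phase $i=j$ runs from $t=0$ (fixed by $\pi_j(0)=y_j$) until $D(j,j)=d_{j,j}=T(j,j)-y_j$, so it agrees as well.

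For the base case $k=0$ one has $A(i,i)=0$ and $D(i,i)=d_{i,i}$ by definition, while $B^i$ stays at level $i$ on $[y_i,y_i+d_{i,i})$ and reaches level $i+1$ at $y_i+d_{i,i}$ (either by its own jump or by coalescing with $B^{i+1}$, which already sits at level $\ge i+1$), so $T(i,i)=y_i+d_{i,i}$ and $D(i,i)=T(i,i)-y_i$. For the step with $j-i=k+1$, the arrival identity is immediate from \eqref{eq:defa} and the generation-$k$ identity $D(i+1,j)=T(i+1,j)-y_{i+1}$, since $A(i,j)=D(i+1,j)+y_{i+1}-y_i=T(i+1,j)-y_i$.

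The departure identity is the crux, and the coalescing structure of \S\ref{sub:coalesce} enters here. Walk $i$ reaches level $j$ at $T(i,j-1)$; since $B^i\le B^{i+1}$, walk $i+1$ has by then also reached level $j$, and it is at level exactly $j$---so that $B^i$ meets and permanently coalesces with it---iff walk $i+1$ has not yet reached level $j+1$, i.e.\ iff $T(i,j-1)<T(i+1,j)$. In that case $B^i$ follows $B^{i+1}$ and $T(i,j)=T(i+1,j)$; otherwise $B^i$ is still free (it never coalesced, so all its jumps are its own) and passes from level $j$ to $j+1$ at $y_i+d_{i,j}$, giving $T(i,j)=y_i+d_{i,j}$. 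Translating the threshold through the generation-$k$ identities $A(i,j)=T(i+1,j)-y_i$ (just proved) and $D(i,j-1)=T(i,j-1)-y_i$, the condition $T(i,j-1)<T(i+1,j)$ is exactly $A(i,j)>D(i,j-1)$, and the two outcomes $T(i,j)=T(i+1,j)$ and $T(i,j)=y_i+d_{i,j}$ reproduce precisely the two cases $D(i,j)=A(i,j)$ and $D(i,j)=d_{i,j}$ of \eqref{eq:defb}; the tie $T(i,j-1)=T(i+1,j)$ has probability zero. I expect the only real work to be making this dichotomy rigorous from the explicit coalescence rule---in particular that coalescence is permanent, so that being free at level $j$ forces $T(i,j-1)=y_i+d_{i,j-1}$ and $T(i,j)=y_i+d_{i,j}$.

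Finally, for \eqref{eqjams} I would argue directly with levels. Fix $x=y_i+t$ and claim that a car $j\ge i$ is stopped at $y_i$ at time $t$ iff $B^i_x\le j<B^{i+1}_x$. For $j>i$ this follows from the dictionary, as the stopped interval $[T(i+1,j)-y_i,T(i,j)-y_i)$ translates to $x\in[T(i+1,j),T(i,j))$, i.e.\ $B^{i+1}_x\ge j+1$ and $B^i_x\le j$; for $j=i$ it holds by direct inspection, since on $[0,d_{i,i})$ the car sits at $y_i$ and $B^i_{y_i+t}=i<B^{i+1}_{y_i+t}$, whereas for $t\ge d_{i,i}$ the car has left and $B^i_{y_i+t}\ge i+1$. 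Since the integers in $[B^i_x,B^{i+1}_x)$ number exactly $B^{i+1}_x-B^i_x$, summing the indicators over $j$ yields $N_{y_i}(t)=B^{i+1}_{y_i+t}-B^i_{y_i+t}$.
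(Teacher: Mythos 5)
Your proposal is correct and follows essentially the same route as the paper: the identical dictionary $A(i,j)=T(i+1,j)-y_i$, $D(i,j)=T(i,j)-y_i$ proved by induction on $j-i$, the same recursive dichotomy for $T(i,j)$ (the paper's~\eqref{eq:recursivetij}), and the same level-counting argument for~\eqref{eqjams}. Your additional care in justifying the dichotomy from the coalescence rule and in noting that ties have probability zero only fills in details the paper leaves implicit.
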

\begin{proof}
To show agreement between~\eqref{a90} and~\eqref{eq:procfromjumps},
we will show for all $i\le j$
\begin{align}
A(i,j) &= T(i+1,j)-y_i \label{a66}\\
D(i,j) &= T(i,j)-y_i \label{a67}\\
T(i,j)-t &= y_i-t+D(i,j) \quad \hbox{ for } \quad D(i,j) \le t < A(i-1,j) \label{a68}
,
\end{align}
where $A$ and $D$ are given by~\eqref{eq:defa}-\eqref{eq:defb} in terms of $\y$ and $\d$, and $(T(i,j))_{i,j}$ is given by~\eqref{eqtij} in terms of $Y$ and $(B^i)_{i}$, which in turn are constructed from $\y$ and $\d$.

We will prove~\eqref{a66} and~\eqref{a67} simultaneously by induction on $j-i \ge 0$, and~\eqref{a68} follows from~\eqref{a67}.
For $j=i$, $T(i+1,i)= y_i$ and $T(i,j)= y_i+ d_{i,i}$ so~\eqref{a66} and~\eqref{a67} hold.
Before proceeding, note that from definition of $(B^i)_{i\in\Z}$ and $(T(i,j))_{i\in\Z, j\ge i-1}$, we have
\begin{equation}
\label{eq:recursivetij}
T(i,j) =
\begin{cases}
T(i+1,j) ,&
T(i,j-1) < T(i+1,j)
,
\\
y_i + d_{i,j}
,&
\text{otherwise}
.
\end{cases}
\end{equation}
Now let $i\in\Z$, $j\ge i$, and suppose~\eqref{a66} and~\eqref{a67} hold for all pairs $i',j'$ such that $0 \le j'-i' < j-i$.
From~\eqref{eq:defa}, by direct substitution of~\eqref{a67} with $i+1$ instead if $i$, we get~\eqref{a66}.
From~\eqref{eq:recursivetij}, by substituting~\eqref{a67} with $j-1$ instead of $j$ and~\eqref{a66}, we get
\begin{equation}
\nonumber
T(i,j) - y_j =
\begin{cases}
A(i,j) ,&
D(i,j-1)+y_i < A(i,j)+y_i
,
\\
d_{i,j}
,&
\text{otherwise}
,
\end{cases}
\end{equation}
so $T(i,j) - y_j = D(i,j)$, proving~\eqref{a67}.

Finally, from~(\ref{eqtij})~and~(\ref{eq:procfromjumps}),
\begin{multline}
\nonumber
\big\{j:\pi_j(t)=y_i,v_j(t)=0\big\}
=
\bigl\{j: T(i+1,j) \leqslant t+y_i < T(i,j) \bigr\},
=
\\
=
\bigl\{j:B^i_{t+y_i} \leqslant j < B^{i+1}_{t+y_i} \bigr\},
\end{multline}
so~(\ref{eqjams}) holds.
\end{proof}

\section{Saturation and condensation}
\label{sec:satcond}

In this section we prove
Proposition~\ref{prop:satcond}

Consider the model adding a traffic jam initially at $0$ denoted $y_0=0$. The traffic jam at $y_1$ disappears at the minimal time $x$ such that $B^1_x=B^2_x$, that is, when the walks starting at $y_1$ and $y_2$ coalesce. This time is dominated from above by the minimal $x'>0$ such that $B^1_{x'}= Y_{x'}$; a time with finite expectation, because $Y$ jumps at rate $\lambda<1$, the jump rate of $B^1$.

The law of the car configuration with the addition of a car at zero is the Palm measure. Since under the Palm measure all cars have the same law, we conclude that a.s.\ every traffic jam disappears. This shows the first part of the proposition.

For the second part, we recall the observation from~\cite{CaceresFerrariPechersky07} that the set of times when a car crosses the origin is distributed as the set of departure times of a $M/M/1$ system with arrival rate $\lambda$ and service rate $1$, which converges to $\mathrm{Poisson}(\min\{\lambda,1\})$.
To prove convergence of $\y^1$, we need to show that $\y^1 \cap [-K,K]$ converges to $\mathrm{Poisson}(\min\{\lambda,1\})$ on $[-K,K]$ for every fixed $K$.
By translation invariance, we can consider $\y^1 \cap [-2K,0]$ instead.
By the first part, with high probability as $t\to\infty$, there are no traffic jams on $[-2K,0]$ at time $t-2K$.
On this event, the set $\y^1(t) \cap [-2K,0]$ equals the set times $s\in[t-2K,t]$ when a car crosses the origin, shifted by $-t$.
The convergence in distribution then follows.

\section{Speed of individual cars}
\label{sec:car}

In this section we prove Proposition~\ref{prop:speed}.

It suffices to consider the Palm distribution and look at the speed of the $0$-th car starting at $y_0=0$. The position of this car is tracked along the line $x=0$ in the bottom picture of Fig.~\ref{fig:graph-46}. By~\eqref{eq:procfromjumps}, when $t\in [T(i+1,0)-y_i, T(i,0)-y_i)$, car 0 is at position $y_i$. Hence, 
\begin{align}
 \label{p17}
v=\lim_{t\to\infty} \frac{-\pi_t(0)}{t} = \lim_{n\to \infty} \frac{-y_{-n}}{T(n,0)-y_{-n}}.
\end{align}
given that the limit exists. Since $y_{-n}=\sup\{t<0:|\y\cap[t,0)|=n\}$ is the first time that a Poisson process of rate $\lambda$ has $n$ events, $y_{-n}/n$ converges to $-1/\lambda$.

We have 3 cases, depicted macroscopically in Fig.~\ref{fig:propo-12}.
\begin{figure}[tb]
\centering
\includegraphics[trim = 0 0 3cm 0, clip,width=\textwidth]{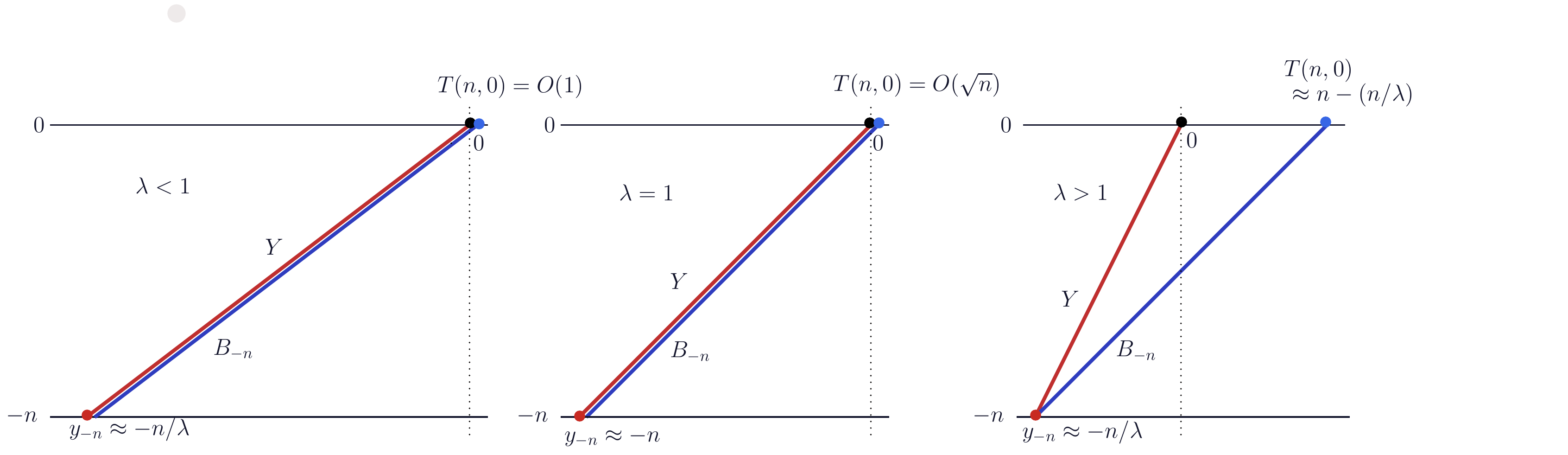}
\caption{Proof of Proposition~\ref{prop:speed}. The red lines describe the macroscopic behavior of $Y$ between times $y_{-n}$ and $0$. Blue lines correspond to $B^{-n}$ in the same time interval. Red dot is $y_{-n}$. Blue dot is $T(n,0)$. Black dot is space-time origin; time is going up. In the subcritical and critical cases the blue and red lines coincide in this scale; we have separated them to indicate that both lines are in the same position. }
\label{fig:propo-12}
\end{figure}
When $\lambda<1$, $T(n,0)$ is order 1. Indeed this is the first time that the walk $B^{-n}$, which starts at $y_{-n}$, leaves the level 0. But, $B^{-n}$ jumps at rate 1 up and it is reflected by the walk $Y$, which jumps at rate $\lambda<1$, the difference $B^{-n}_t-Y_t$ is of order one, and it is stochastically dominated by a geometric random variable of parameter $\lambda$ (the stationary distribution for $B^{-n}_t-Y_t$). Hence, $v=1$ when $\lambda<1$.

The critical case $\lambda=1$ uses the same argument but now both $Y$ and $B^{-n}$ jump at rate 1. Hence, their distance at the hitting time of $n$ is of order $\sqrt{n}$ and so is $T(n,0)$. On the other hand, $y_{-n}$ is of order $-n$ in this case and we have that $v=1$ when $\lambda=1$.

In the supercritical case $\lambda>1$ we have
\begin{align}
 \label{p18}
 \frac{y_{-n}}{n} \to_n -1/\lambda;\quad\text{and}\quad\frac{n}{T(n,0)-y_{-n}}=\frac{B^{-n}_{T(n,0)}-B^{-n}_{y_{-n}}}{T(n,0)-y_{-n}} \to_n 1.
\end{align}
Substituting in~\eqref{p17} we get $v=\frac1\lambda $ when $\lambda>1$.




\section{Scaling limits}

The proof of Theorems~\ref{theo:1}~and~\ref{theo:scalingcritical} goes as follows.

First, as seen in the previous section, the configuration and size of traffic jams $(\y_t,N_t)$ is given in terms of differences in the height of coalescing paths $\{B^i\}_{i\in\Z}$ that are reflected by another path $Y$.
By subtracting the drift and rescaling this collection of paths diffusively, in the scaling limit we obtain a collection $\mathcal{W}$ of coalescing-reflected Brownian motions instead of jump processes. Now notice formula~\eqref{eq:masscontcrit} is the continuous analogue of~\eqref{eqjams}.

Second, although this explains what the scaling limits should be, we still have to specify a topology so the claim that $(\y^L_t,N^L_t)$ converges to $(\x_t,n_t)$ can have a precise meaning, even for fixed $t$.

Third, if we are going to use the fact that the rescaled collection $\W^L$ converges in distribution to $\mathcal{W}$, then we need to specify a topology on collections of paths as well.
This topology should be weak enough so that we can prove that $\W^L$ converges in distribution to $\mathcal{W}$, but strong enough so that the map $\mathcal{W} \mapsto (\x_t,n_t)$ given by~\eqref{eq:masscontcrit} is continuous at a.e.\ $\mathcal{W}$.
These facts all together will imply that $(\y^L_t,N^L_t) \overset{\mathrm{d}}{\to} (\x_t,n_t)$.

For collections of paths, we will use same topology as the one introduced in~\cite{FontesIsopiNewmanRavishankar04} for the Brownian web.
This enables the use of existing results about convergence of a collection of jump paths to the latter process.
So besides checking that the scalings stated in Theorems~\ref{theo:1}~and~\ref{theo:scalingcritical} are the correct ones, we are left with the task of proving that the map $\mathcal{W} \mapsto (\x_t,n_t)$ given by~\eqref{eq:masscontcrit} is continuous at a.e.\ realization of the Brownian web $\mathcal{W}$.


\subsection{Finding the scaling limits}

Suppose $\lambda>1$.
Consider the path $(Y_x)_{x\in \R}$ and the collection of paths $\{(B^i_s)_{s \ge y_i} : i \in \Z\}$ constructed in \S\ref{sub:coalesce}.
The paths $B^i$ jump by $+1$ unless they are blocked by $Y$.
If we consider $(Y_x-x)_{x\in\R}$ and $(B^i_s - s)_{s \ge y_i}$, we obtain a new curve $\tilde{Y}$ that has mean drift $\lambda-1$ and new curves $\tilde{B}^i$ that have zero mean drift.
Now rescaling space by a large $L>1$ and the paths $\tilde{Y}$ and $\tilde{B}$ by $\sqrt{L}$, we get a new path $Y^L$ with a large slope $(\lambda-1)\sqrt{L}$, and a path collection $\B^L$ which approximates coalescing Brownian motions started at $(x,Y^L_x)$, each one reflected by $Y^L$ during a microscopic amount of time, see Fig.~\ref{fig:scaling-supercritical}.

\begin{figure}[tb]
\hspace*{\fill}\includegraphics[width=.95\textwidth]{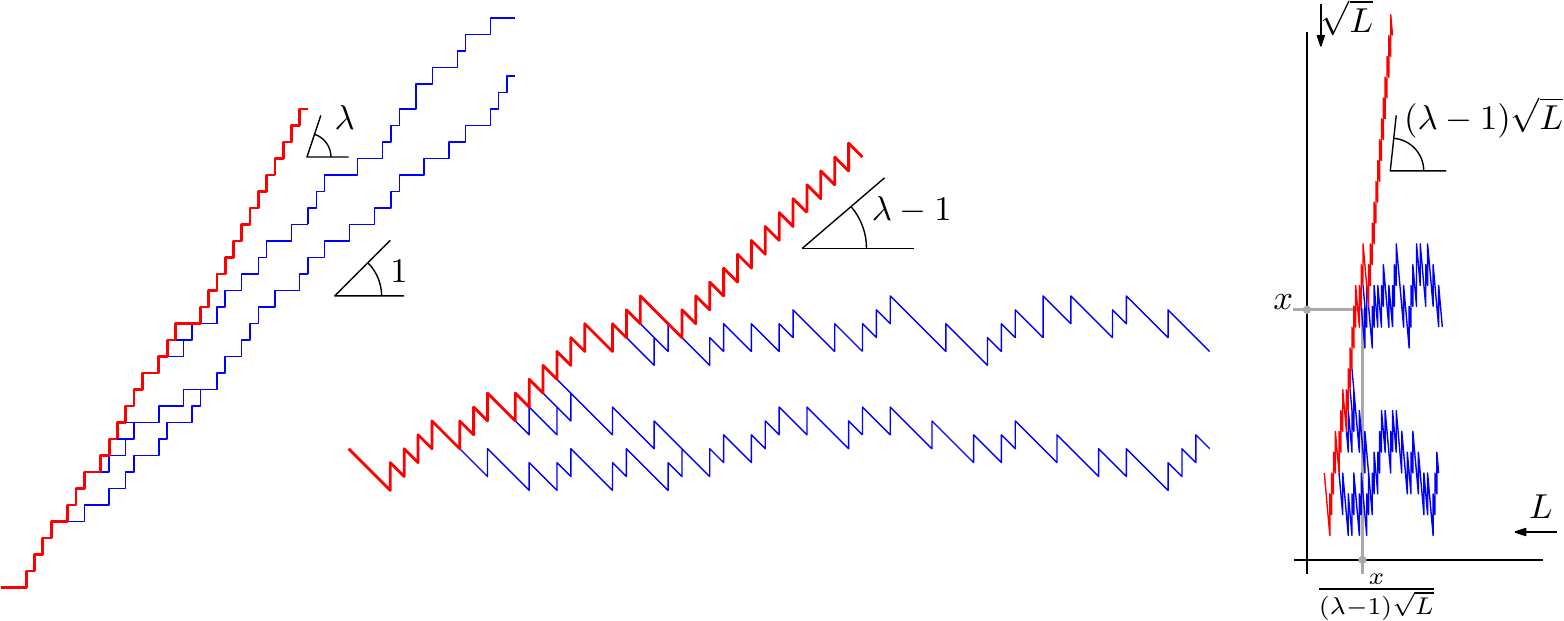}\hspace{\fill}
\caption{Scaling for $\lambda>1$ and large $L$. On the left, paths $Y$ in red and $B$ in blue; on the center, same paths minus the blue drift; on the right, diffusive rescaling of these paths by a large $L$.}
\label{fig:scaling-supercritical}
\end{figure}

Fix $x\in\R$ and note that the height $x$ is attained by $Y^L$ at point $\frac{x+o(1)}{(\lambda-1)\sqrt{L}}$, simply by law of large numbers. At the microscopic scale, it corresponds to position $\frac{\sqrt{L}}{\lambda-1} \, x + o(\sqrt{L})$.
In the limit $L\to \infty$, the path $Y^L$ becomes vertical and paths $B$ converge to coalescing Brownian motions started at points $(0,x)$.

The relation between height $x$ and microscopic position $\frac{\sqrt{L}}{\lambda-1} x$ holds in the limit, although it cannot be visualized under the diffusive scale (this happens because the horizontal axis mixes space and time, and the scale for spatial location of traffic jams is $\frac{\sqrt{L}}{\lambda-1}$ whereas the scale for time is $L$).
This explains the scaling of $\y_t$ in~\eqref{eq:superrescaling}.

\bigskip

We now consider $\lambda=1$.
Since the curve $Y$ has asymptotic slope $1$, after subtracting the drift and applying a diffusive scaling, it converges to a Brownian motion $(W_x)_{x\in\R}$ rather than a vertical line, see Fig.~\ref{fig:scaling-critical}.

\begin{figure}[tb]
\hspace*{\fill}\includegraphics[width=.75\textwidth]{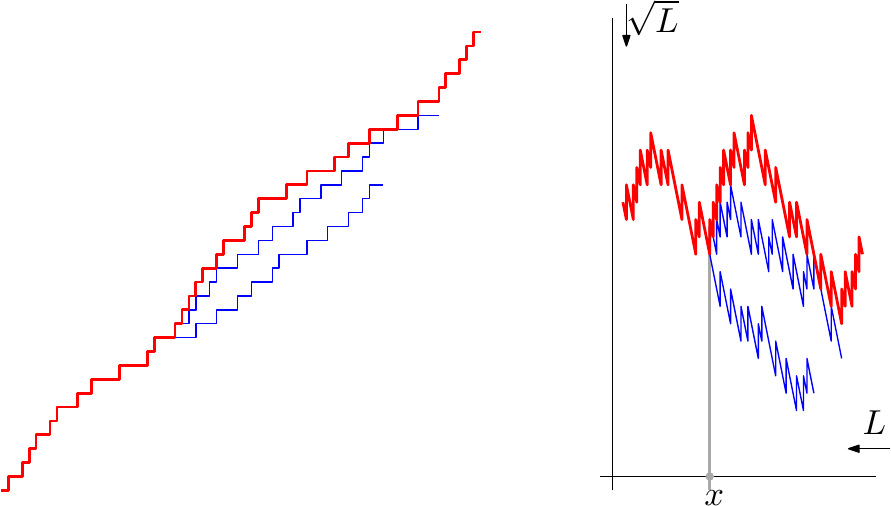}\hspace{\fill}
\caption{Scaling for $\lambda=1$ and large $L$. On the left, paths $Y$ in red and $B$ in blue; on the right, diffusive rescaling of these paths by a large $L$.}
\label{fig:scaling-critical}
\end{figure}

The limiting process is a little more complicated. The collections of paths $B$ converges to a collection of coalescing Brownian motions started from $(x,W_x)$ for each $x\in\R$, reflected by $W$ from above, rather than starting from the vertical axis without reflection.
The macroscopic location of a traffic jam, which is identified by the existence of nearby curves $B^{x-}$ and $B^{x+}$ that do not coalesce during $t$ time units, equals $x$. Hence, unlike the supercritical case, the location of a traffic jam can be visualized in the diffusively rescaled picture. Such a point corresponds to the microscopic position $Lx$, which explains the scaling of $\y_t$ in~\eqref{eq:scaling}.

In the remainder of this section we prove Theorem~\ref{theo:scalingcritical}.
Because of the vertical collapse of the red line in Fig.~\ref{fig:scaling-supercritical}, the proof of Theorem~\ref{theo:1} is tedious and technically simpler, so we omit it.

\subsection{Brownian web and path collections}

Let us first describe how to read the process $(\x_t,n_t)_{t>0}$ off a double Brownian web $\W$ (see~\cite{FontesIsopiNewmanRavishankar04} for its definition).
The paths $(B^x_s)_{s \ge x}$ described at the introduction should be reflected by a Brownian motion $(W_x)_{x \in \R}$.
Reflection of paths are not a feature of the Brownian web, but \emph{primal paths} (those which move from left to right while diffuse vertically) do get reflected by \emph{dual paths} (those which move form right to left).
So we restrict the paths to $x \le s \le 0$.

We want to study the point process
\begin{equation}
\label{eq:configuration}
\cP
=
\big\{
(t,y,m) : y \in \x_{t}, m = n_{t}(y)
\big\}
.
\end{equation}
More precisely, given $0<\varepsilon<M<\infty$ and $\t = \{t_1,\dots,t_n\} \subseteq (\varepsilon,M]$, we want to describe the projection
$\cP_{\varepsilon,M,\t}:=\cP \cap \t \times [-4M,-2M] \times [\varepsilon,\infty)$.
By translation invariance, restricting space to $y \in [-4M,-2M]$ is equivalent to looking at $[-M,M]$.

We say that a collection $\B$ of semi-infinite paths in the $xu$-plane is \emph{good} if (i) for each point $(x,u)$ at least one primal path and one dual path start at $(x,u)$, (ii) paths in the primal do not cross paths in the dual nor other paths in the primal, (iii) for each path in the primal, every sub-path obtained by starting at a later time is also in $\B$, and (iv) $\B$ is closed in the topology of~\cite{FontesIsopiNewmanRavishankar04}.
We note that the double Brownian web $\W$ is a.s.\ good.

\begin{figure}[tb]
\hspace*{\fill}\includegraphics[page=1,width=.9\textwidth]{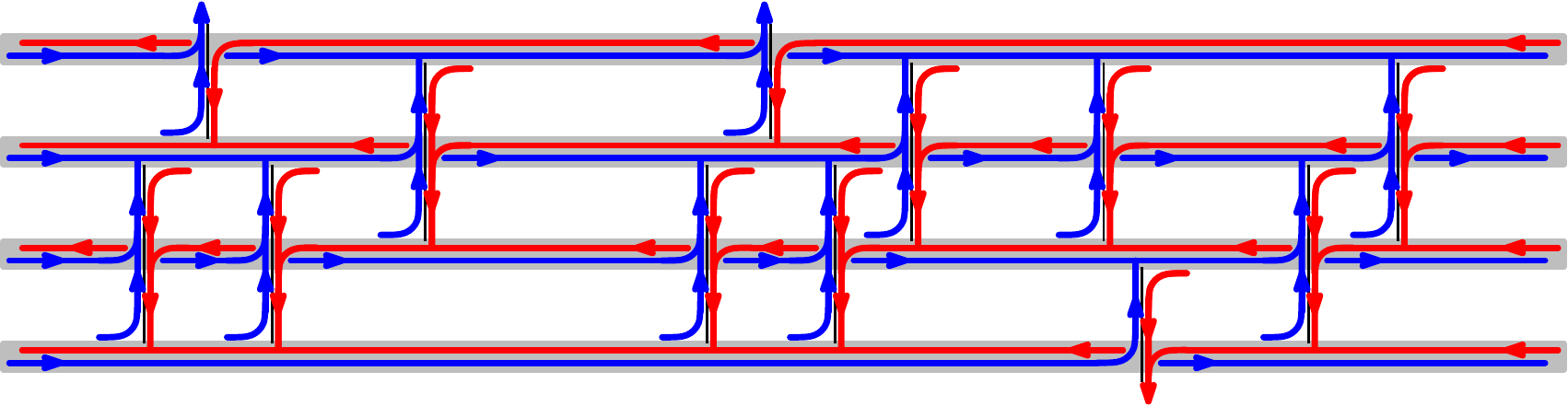}\hspace{\fill}
\caption{Construction of a good path collection $\B$.}
\label{fig:primaldual}
\end{figure}

We define the map $\Psi$ that assigns to a good set $\B$ of paths a set $\Psi(\B)$ of triples of the form~\eqref{eq:configuration} as follows.
At $x=0$, follow a dual path $(Y_x)_{x \le 0} \in \B$ started at $Y_0=0$.
(Normally there will be a single dual path at $(0,0)$, otherwise just take the lowest one.)
Now for each $x \in [-5M,-M]$ let $(B^x_s)$ be the lowest path started at $(x,Y_x)$.
Since $\B$ is closed and paths do not cross, $B^x$ is the monotone limit of paths starting at $(x,u)$ as ${u \uparrow Y_x}$, and therefore $B^x_s \leqslant Y_s$ for all $s\in[x,0]$.
Finally, $\Psi(\B)$ is defined as the set of all triples $(t,y,m)$ with $t \in\t$, $y \in [-4M,-2M]$, $m\in[\varepsilon,\infty)$ and satisfying~\eqref{eq:masscontcrit}.

In the above construction, in case $\B$ is the double Brownian web $\W$, the path $Y$ corresponds to the Brownian motion $W$ restricted to $x \le 0$.

Regarding the scaling limit~(\ref{eq:sclimit}), we want to show that $\cP^L \stackrel{\rm d}\to \cP$ as $L\to\infty$, where
\[
\cP^L
=
\big\{
(t,y,m) : Ly \in \y_{Lt}, \sqrt{L}\, m = N_{Lt}(Ly)
\big\}
.
\]
The above convergence is in the sense of compact restrictions of finite-dimensional time projections, as stated in Theorems~\ref{theo:1}~and~\ref{theo:scalingcritical}.
More precisely, for every $0<\varepsilon<M<\infty$ and $\t = \{t_1,\dots,t_n\} \subseteq (\varepsilon,M]$,
the projection
$\cP^L_{\varepsilon,M,\t}$
converges as a random counting measure to $\cP_{\varepsilon,M,\t}$.

We now describe another collection of paths, illustrated in Fig.~\ref{fig:primaldual}.
On the $xu$-plane, for each line with integer height $i\in \Z$, consider a Poisson Point Process with intensity 1.
At each point $(x,i)$ we start a primal path which jumps up by $+1$ each time it encounters a Poisson mark.

For each $x \in \R$ and $u \in (i-1,i]$, start also a primal path which stays constant until the first Poisson mark of level $i-1$ or $i$. At this point, let it jump to $i$ or $i+1$, respectively, and follow the previous rules after that.

In this construction, primal paths coalesce as soon as they meet.
They may overlap with dual paths but not cross them.
Finally, taking the closure of this collection, we get a good set $\B$ of paths.

\begin{figure}[tb]
\hspace*{\fill}\includegraphics[page=2,width=.9\textwidth]{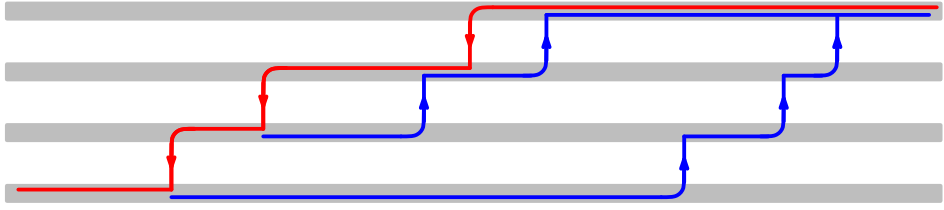}\hspace{\fill}
\caption{Reading $(Y_x)_{x \le 0}$ and $(B^i_s)_{y_i \le s \le 0}$ off the path collection of Fig.~\ref{fig:primaldual}. This picture shows $y_0,y_{-1},y_{-2}$ and part of $B^{0},B^{-1},B^{-2}$ from the example of Fig.~\ref{fig:graph-46}.}
\label{fig:reading}
\end{figure}

We now describe how to read the processes $(Y_x)_{x \le 0}$ and $(B^i_s)_{y_i \le s \le 0}$ introduced in \S\ref{sub:coalesce} off this collection $\B$, as shown in Fig.~\ref{fig:reading}.
To get the curve $(Y_x)_{x \le 0}$, follow the dual path started at $(x=0,i=1)$.
Then for $i\le 0$, the point $y_i = \sup\{x:Y_x \leq i\}$ is a ``backward hitting time'' of level $i$.
To get the curve $(B^i_s)_{y_i \le s \le 0}$, follow the primal path that starts at $(y_i,i)$.
These paths have the same distribution as those described in \S\ref{sub:coalesce}.
As a technical point, we add paths at $\infty$ to make this collection $\B$ compact in the~\cite{FontesIsopiNewmanRavishankar04} topology.

To conclude, we note that the same map $\Psi$ described above will now coincide with~\eqref{eqjams}.
To get the rescaled process $\cP^L_{\varepsilon,M,\t}$ we need to take $\Psi(\B^L)$, with
$$\B^L = \Big\{ \big( \tfrac{B_{Lx}-Lx}{\sqrt{L}}\big)_x : B \in \B \Big\}.$$
Since $\B^L \stackrel{\rm d}\to \W$, it only remains to show that $\Psi$ is continuous at a.e.\ $\W$.

\subsection{Continuity almost everywhere}
\label{sub:continuous}

In this section we prove the following lemma.

\begin{lemma}
\label{lemma:continuous}
For each $\varepsilon,M,\t$ fixed, the map $\Psi$ is continuous at a.e.\ realization $\W$ of the Brownian web.
\end{lemma}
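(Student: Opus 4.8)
The plan is to read the asserted continuity as vague convergence of \emph{finite} counting measures supported on the compact window $\t\times[-4M,-2M]\times[\varepsilon,\infty)$. For such measures, continuity at $\W$ means that for every sequence of good collections $\B_n\to\W$ in the topology of~\cite{FontesIsopiNewmanRavishankar04}, the point set $\Psi(\B_n)$ is, for large $n$, in bijection with $\Psi(\W)$ through a correspondence that displaces each point by a vanishing amount. I would prove this in two halves: a persistence half (every point of $\Psi(\W)$ is approximated by a point of $\Psi(\B_n)$, so none is lost) and a no-spurious-points half (every point of $\Psi(\B_n)$ in the window approaches a point of $\Psi(\W)$, so none is created). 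Both rest on the same mechanism: FINR convergence forces the lowest primal paths started near a given space-time point, together with the reflecting dual path through the origin, to converge.

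First I would restrict to a full-measure event $\Omega_0$ of \emph{regular} webs on which: (a) the dual path $Y$ through $(0,0)$ is unique and equals $W$ restricted to $x\le0$; (b) $\Psi(\W)$ meets the window in finitely many points; (c) no such point $(t,y,m)$ lies on the boundary of the window, i.e.\ $y\notin\{-4M,-2M\}$ and $m\neq\varepsilon$; and (d) for each $t\in\t$ the coalescence time of the left and right paths $B^{y-},B^{y+}$ differs from $y+t$. Finiteness in (b) follows from the a.s.\ local finiteness of branch points of the coalescing family together with the strict threshold $m\ge\varepsilon>0$; conditions (c) and (d) hold a.s.\ because the separations $n_t(y)$ and the relevant coalescence times have no atoms, and because the standard classification of special points of the Brownian web (T\'oth--Werner) excludes points with three or more outgoing paths at the deterministic times of $\t$.

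For the persistence half, fix $(t,y_*,m_*)\in\Psi(\W)$ with $m_*>\varepsilon$. Since $B^{y_*-},B^{y_*+}$ are the monotone limits of lowest paths started at $(x,Y_x)$ as $x\uparrow y_*$ and $x\downarrow y_*$, I can choose $x^-<y_*<x^+$ close to $y_*$ whose lowest paths in $\W$ are still separated by more than $\varepsilon$ at time $y_*+t$. Because $\B_n\to\W$, the dual values $Y_{n,x^\pm}$ converge to $Y_{x^\pm}$ and the lowest primal paths of $\B_n$ from $(x^\pm,Y_{n,x^\pm})$ converge to the corresponding paths of $\W$, uniformly on $[x^-,y_*+t]$; hence for large $n$ they remain separated by more than $\varepsilon$ near time $y_*+t$. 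Two non-coalesced paths must straddle a branch point, so $\B_n$ has a branch $y_n\in(x^-,x^+)$, and path convergence makes the associated mass $n_t(y_n)$ tend to $m_*$, producing a point of $\Psi(\B_n)$ within any prescribed distance of $(t,y_*,m_*)$. Running this over the finitely many points of $\Psi(\W)$ shows no point is lost.

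The no-spurious-points half is the main obstacle. Suppose $(t,y_n,m_n)\in\Psi(\B_n)$ with $m_n\ge\varepsilon$ and $y_n\in[-4M,-2M]$; passing to a subsequence, $(y_n,m_n)\to(y_*,m_*)$ with $m_*\ge\varepsilon$ and $y_*\in[-4M,-2M]$. The pair of primal paths witnessing this separation is precompact in the FINR topology and converges to a pair of paths of $\W$ started at $(y_*,W_{y_*})$, reflected by $W$, whose separation at time $y_*+t$ is $m_*>0$; hence $(t,y_*,m_*)\in\Psi(\W)$, and by regularity it is an interior point of the window. The two genuinely delicate issues here are, first, that in the critical case the reflecting boundary is the fluctuating limit $Y_n\to W$ rather than the vertical line of the supercritical case, so I must verify that the non-crossing constraint $B^x\le Y$ survives the passage to the limit simultaneously with path convergence; and second, that branch points cannot accumulate, which is precisely where regularity of $\Omega_0$ (atomless separations and coalescence times, absence of triple points) guarantees that each subsequential limit is a bona fide non-degenerate branch point of $\W$ and that finitely many of them exhaust $\Psi(\B_n)$ for large $n$. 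Combining the two halves yields $\Psi(\B_n)\to\Psi(\W)$ as counting measures, which is the claimed continuity at every $\W\in\Omega_0$.
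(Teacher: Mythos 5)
Your high-level framing (a persistence half plus a no-spurious-points half, run on a full-measure regularity event using the classification of special points) is compatible with what the paper does, but the central step of both halves is asserted rather than proved, and it is precisely the step that carries all the content. You claim that FINR convergence $\B_n\to\W$ forces the \emph{lowest} primal path of $\B_n$ started at $(x^\pm,Y_{n,x^\pm})$, and the dual path of $\B_n$ through the origin, to converge to the corresponding extremal paths of $\W$. In the Hausdorff-type topology of~\cite{FontesIsopiNewmanRavishankar04} this is false in general: convergence of the path collections gives you, for each path of $\W$, \emph{some} nearby path of $\B_n$, but it gives no control over which path of $\B_n$ starts at (or near) a prescribed point, and ``lowest path from a point'' is only semicontinuous as a functional of the collection. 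The same problem infects your no-spurious-points half, where the subsequential limit of the witnessing pair is claimed to start at $(y_*,W_{y_*})$ and to respect the reflection constraint $B\le Y$ --- you flag both issues as ``delicate'' but then dispose of them by appealing to regularity of $\Omega_0$, which does not by itself supply the argument. Establishing exactly this kind of stability is what the lemma is about, so as written the proposal is circular at its core.

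The paper's proof fills this gap with an explicit geometric certificate: small boxes $C_1,C_2,C_3$ at time $y+t$ and $D_1,D_2,D_3,D_4$ near $(y,Y_y)$ and the origin, shielded by six dual paths $A^1,\dots,A^6$ chosen on either side of $B^{y-},B^{y+},B^{y*}$. Because primal paths cannot cross dual paths, \emph{every} path of $\W$ (not just the lowest one) starting in $D_2$ must cross $C_1$ or $C_2$ horizontally with both cases occurring, and similarly for $D_3$; these are open conditions in the path metric, hence inherited by any good collection $\W'$ with $d(\W,\W')<\delta$, and they alone force existence and uniqueness of $(t,y',m')\in\Psi(\W')$ with $y'\approx y$, $m'\approx m$. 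That shielding construction (or some equivalent use of non-crossing with dual paths) is what your proposal is missing; without it, neither the convergence of the reflected lowest paths, nor the uniqueness of the nearby branch point, nor the convergence of the masses actually follows from $\B_n\to\W$.
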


The notion of continuity is related to the Brownian web metric $d(\cdot,\cdot)$ as defined in~\cite{FontesIsopiNewmanRavishankar04}.
Recalling~\eqref{eq:masscontcrit} and the definition of $\Psi$, for fixed $\varepsilon,M,\t$, the event $(t,y,m) \in \Psi(\W)$ means that $m > \varepsilon$, $-4M < y < -2M$, $y \in \x_t$, and $n_t(y)=m$ (except for the zero-probability event that $-4M \in \x_t$, $-2M \in \x_t$ or $n(y)=\varepsilon$ for some $y \in \x_t$).
So Lemma~\ref{lemma:continuous} is equivalent to the following.

\begin{lemma}
\label{lemma:continuity}
For each $\varepsilon,M,\t$ fixed, for a.e.\ $\W$, for every $(t,y,m) \in \Psi(\W)$ and $\varepsilon>0$, there exists $\delta>0$ such that, for every 
good collection $\W'$ of paths with $d(\W,\W')<\delta$, there exists unique $y'$ and $m'$ such that $|y-y'|<\varepsilon$, $|m-m'|<\varepsilon$, and $(t,y',m')\in \Psi(\W')$.
\end{lemma}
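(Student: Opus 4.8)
The plan is to collapse the statement to a finite, non-degenerate picture and then transport it across the Brownian web metric $d$. Fix $\varepsilon,M,\t$ and a target accuracy $\eta>0$ (the quantity written as $\varepsilon$ in the second occurrence of the statement). For a fixed $t\in\t$ the set $\{y\in[-4M,-2M]:n_t(y)\ge\varepsilon\}$ is a.s.\ finite, since coalescence makes the gaps $n_t(\cdot)$ locally summable; enumerate the points of $\Psi(\W)$ at time $t$ as $(t,y_1,m_1),\dots,(t,y_k,m_k)$ with $m_j>\varepsilon$. First I would record the almost-sure non-degeneracy facts that make each such point robust: (a) each $y_j$ is a simple branching location of the reflected family $\{B^x\}$, i.e.\ a $(1,2)$-point, with exactly two outgoing paths $B^{y_j-}$ and $B^{y_j+}$; (b) $m_j\ne\varepsilon$, and the $y_j$ are pairwise distinct, distinct across the finitely many $t\in\t$, and distinct from the window endpoints $-4M,-2M$; (c) there is no $y\in[-4M,-2M]$ with $n_t(y)=\varepsilon$ exactly. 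Each of these fails only on a null event, because $\t$ is a fixed finite set and each coincidence pins a deterministic space--time relation that the Brownian web a.s.\ avoids.

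The core of the argument is a duality certificate for each separation. For each $j$, pick a height $h_j$ strictly between $B^{y_j-}_{y_j+t}$ and $B^{y_j+}_{y_j+t}$ and follow the dual path $\gamma_j\in\W$ through $(y_j+t,h_j)$ backwards in time. Because primal and dual paths of the double web do not cross, $\gamma_j$ is trapped between $B^{y_j-}$ and $B^{y_j+}$ and is funnelled down to the branch point $(y_j,W_{y_j})$ as time decreases to $y_j$; thus $\gamma_j$ is an explicit element of the closed collection $\W$ witnessing that the two primal families on either side of $y_j$ cannot coalesce before time $y_j+t$, and that their gap there equals $m_j$. This is the object that survives perturbation: when $d(\W,\W')<\delta$, the metric provides a dual path $\gamma_j'\in\W'$ uniformly close to $\gamma_j$ on the compact window $[y_j,y_j+t]$, and primal paths $\beta^{\prime-}_j,\beta^{\prime+}_j\in\W'$ close to $B^{y_j-},B^{y_j+}$ (realised as the lowest paths from $(x,Y'_x)$ with $x$ just left/right of $y_j$, where $Y'$ is the dual path of $\W'$ from the origin, itself close to $W=Y$). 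Since $\beta^{\prime-}_j$ and $\beta^{\prime+}_j$ lie on opposite sides of $\gamma_j'$ and may not cross it, they stay separated up to a time close to $y_j+t$, with gap close to $m_j$, hence $>\varepsilon$; shrinking $\delta$ makes all these discrepancies $<\eta$. The split of the $\W'$-family occurs where $\gamma_j'$ meets the reflecting curve $Y'$, at a location $y_j'$ within $\eta$ of $y_j$, which yields the required $(t,y_j',m_j')\in\Psi(\W')$.

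For uniqueness, and to rule out a spurious companion, I would use the simplicity (a): around $y_j$ there is a radius $\rho_j$ such that every $B^x$ with $x\in(y_j-\rho_j,y_j)$ coalesces with $B^{y_j-}$ before time $y_j+t$, and symmetrically on the right, so that $\gamma_j$ is the \emph{only} dual path descending to the reflecting curve in $(y_j-\rho_j,y_j+\rho_j)$ within the relevant height band. Metric closeness preserves this absence of competing dual structure at scale $\rho_j$ once $\delta\ll\rho_j$, so $\W'$ has exactly one split point in that neighbourhood, giving uniqueness of $(y_j',m_j')$. Finally, outside $\bigcup_j(y_j-\rho_j,y_j+\rho_j)$ the gaps $n_t$ of $\W$ are bounded away from $\varepsilon$; running the same stability argument in reverse shows $\W'$ has no $\ge\varepsilon$ separation there either, which is precisely what upgrades this per-point statement to continuity of $\Psi$ as a counting measure and makes Lemma~\ref{lemma:continuity} equivalent to Lemma~\ref{lemma:continuous}.

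The main obstacle is the middle step: converting the purely qualitative closeness supplied by the Brownian web metric into quantitative control of the \emph{location} $y_j'$ and the \emph{mass} $m_j'$, i.e.\ showing that a macroscopic separation at time-lag $t$ is both maintained and correctly placed under a $\delta$-perturbation. This rests on the a.s.\ non-degeneracy that each relevant $y_j$ is a simple branch point carrying a transversal dual wedge of positive opening; making precise that such a wedge is neither destroyed (no collapse of the branch) nor duplicated (no spurious branch), uniformly over the finitely many pairs $(t,y_j)$, is the technical heart of the proof.
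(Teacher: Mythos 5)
Your overall architecture matches the paper's: reduce to finitely many non-degenerate branch points, certify each separation by a dual path trapped between $B^{y-}$ and $B^{y+}$ that descends to the branch point $(y,Y_y)$, and transport the picture to $\W'$ using the fact that primal paths cannot cross dual paths; the paper implements exactly this with the boxes $C_1,C_2,C_3,D_1,D_2,D_3$ and the dual paths $A,A^1,\dots,A^6$. There are, however, two concrete gaps. First, the branch point is of type $(0,3)$, not $(1,2)$: because the certifying dual path $A$ coalesces with the dual path $Y$ at $(y,Y_y)$, three primal paths $B^{y-}\le B^{y+}\le Y\le B^{y*}$ depart from it. The third path $B^{y*}$, lying above the reflecting curve, is not decorative: the paper needs the box $C_3$ around it, together with the property that paths started in $D_3$ cross either $C_2$ or $C_3$ and that dual paths started near the origin pass between $C_2$ and $C_3$, in order to guarantee that the perturbed reflecting curve $Y'$ sits above the analogue of $B^{y+}$ at time $y+t$. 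This is what pins the mass $m'$ to the gap between the analogues of $B^{y-}$ and $B^{y+}$ rather than to something contaminated by $Y'$ or by paths of $\W'$ started above the reflecting curve. Your two-path picture gives a lower bound on the separation (via the dual certificate $\gamma_j'$) but not the two-sided control needed for $|m'-m|<\varepsilon$.

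Second, you assert without justification that the dual path $Y'$ of $\W'$ started at the origin is close to $Y$. This does not follow from $d(\W,\W')<\delta$ alone, since $Y'$ is a specific path selected by the map $\Psi$ and its stability requires the almost-sure fact that the origin is a point of type $(0,1)$. The paper handles this with the box $D_4$: two primal paths started at $(-\varepsilon,Y_{-\varepsilon})$, one on each side of $Y$, remain separated at $x=0$ precisely because the origin is simple, so $D_4$ fits between them, and any dual path of $\W'$ started in $D_4$ is then trapped between the corresponding nearby primal paths of $\W'$ and stays close to $Y$ over the whole window. Without this anchoring step the transported picture is not located anywhere, since every object in $\Psi(\W')$ is defined relative to $Y'$. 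Your reduction to a finite non-degenerate configuration and your uniqueness argument via local simplicity are otherwise in line with the paper's proof.
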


In the remainder of this section we prove Lemma~\ref{lemma:continuity}.
The proof consists in a tedious analysis of properties of $\W$ that hold on the event $(t,y,m) \in \Psi(\W)$.

We will show existence of small (width and height less than $2\varepsilon$) rectangular boxes $C_1,C_2,C_3,D_1,D_2,D_3,D_4$ with the following properties.
Box $D_1$ is horizontally centered at $y$.
Boxes $D_2$ and $D_3$ are contained in $D_1$.
Box $D_4$ is centered at the origin.
Boxes $C_1,C_2,C_3$ are horizontally centered at $y+t$, and $C_3$ is above $C_2$ which is $m$ units above $C_1$.
All dual paths of $\W$ starting in $D_4$ coalesce at some time $s \in [y+t,0]$, they pass between $C_2$ and $C_3$, and they cross $D_1$, $D_2$ and $D_3$ horizontally.
Paths of $\W$ starting in $D_1$ cross either $C_1$, $C_2$ or $C_3$ horizontally.
Paths of $\W$ starting in $D_2$ cross either $C_1$ or $C_2$, and both cases occur.
Paths of $\W$ starting in $D_3$ cross either $C_2$ or $C_3$, and both cases occur.
See Fig.~\ref{fig:continuity}.

\begin{figure}[tb]
\centering
\includegraphics[page=1,width=\textwidth]{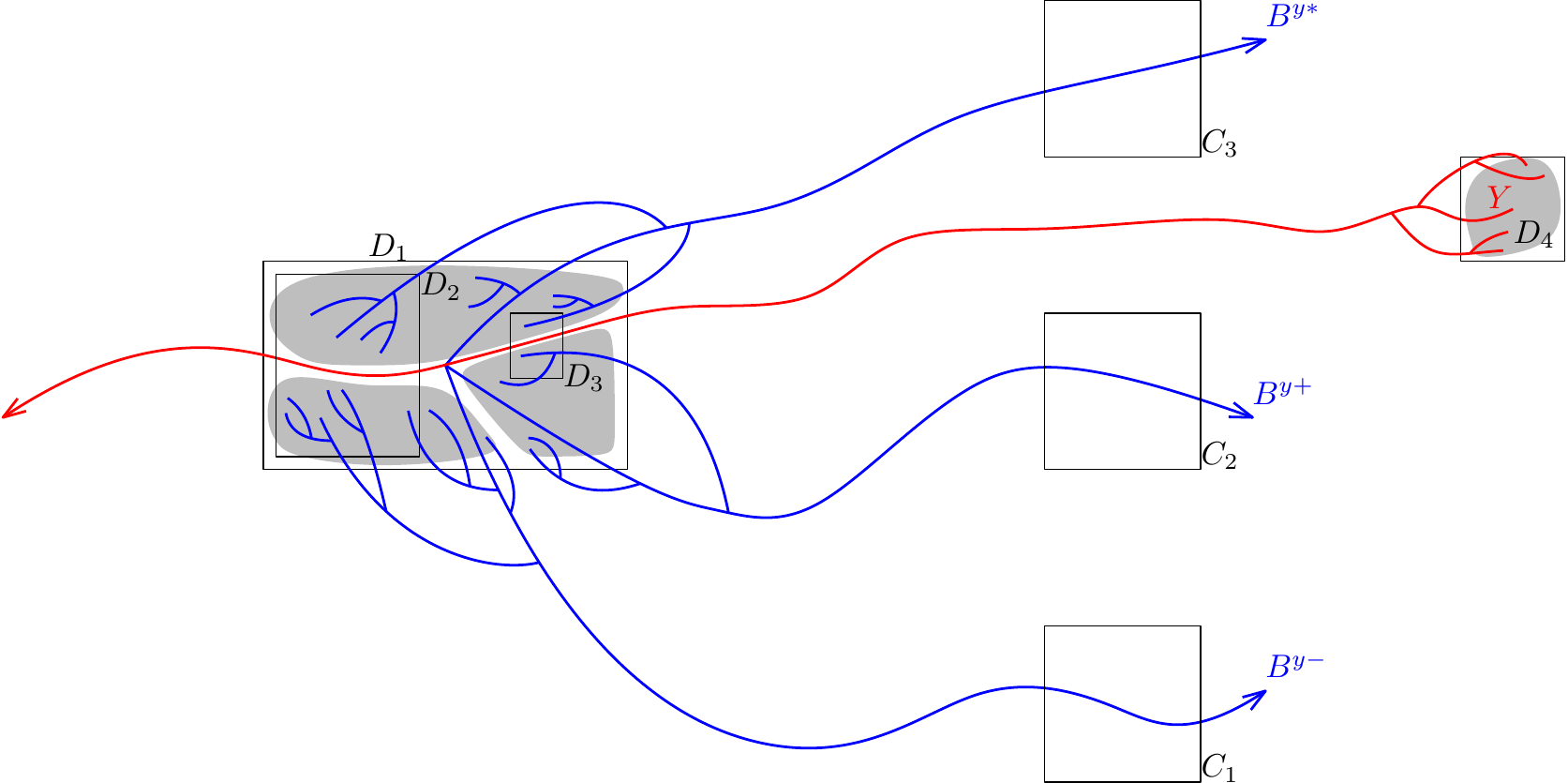}
\caption{Proof of Lemma~\ref{lemma:continuity}.}
\label{fig:continuity}
\end{figure}

We then observe that the above properties alone imply existence and uniqueness of some $y' \approx y$ and $m' \approx m$ such that $(t,y',m')\in \Psi(\W)$.
More precisely, assuming existence of boxes with these properties, there is unique $y'$ with $|y'-y|<\varepsilon$ such that $(t,y',m') \in \Psi(\W)$ for some $m'>0$. Moreover, $m'$ is unique and satisfies $|m'-m|<2\varepsilon$.
Finally, if $d(\W,\W')$ is small enough, then the same properties hold for $\W'$, which then concludes the proof.

Let us proceed to proving existence of these boxes.

We start by recalling a notion of point types. A point $(x,u)$ is of \emph{type} $(i,j)$ in a Brownian web $\W$ if there are exactly $i$ different paths \emph{arriving} at space-time point $(x,u)$ and exactly $j$ different paths \emph{departing} from $(x,u)$. Since the Brownian web is a.s.\ a complete family of coalescing paths, by reversing time the points of type $(i,j+1)$ will become $(j,i+1)$.
As shown in~\cite{FontesIsopiNewmanRavishankar04}, a Brownian web a.s.\ only contains points of types $(0,1)$, $(1,1)$, $(0,2)$, $(2,1)$, $(1,2)$, and $(0,3)$, depicted as
\\
\includegraphics[width=\textwidth]{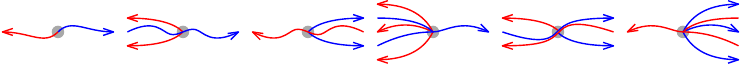}
and moreover each deterministic point is a.s.\ of type $(0,1)$.

So we can assume that $\W$ only has points of these six types.
Now observe that,  for all $t \in \t$ and $-4M < y < -2M$, we have $y\in\x(t)$ if and only if there is a dual path $A = (A_s)_{s \leq y+t} \in \W$ with $A_{y+t}<0$ coalescing with $Y$ at time $y$.
In this case, the point $(y,Y_y)$ must be of type $(0,3)$.

Suppose $(t,y,m) \in \Psi(\W)$ and let $\varepsilon>0$ be fixed.
We need to find $\delta>0$ depending on $\varepsilon$ and $\W$ such
that,
if $\W'$ is a good collection of paths with $d(\W,\W')<\delta$, then there exists unique $y'$ and $m'$ such that $|y-y'|<\varepsilon$, $|m-m'|<\varepsilon$, and $(t,y',m')\in \Psi(\W')$.
By taking first $\delta=1$, this property only depends on a finite window in the $xu$-plane, so we may assume that the metric $d$ is given by the Hausdorff distance between collections of paths, the distance between paths being itself the Hausdorff distance between sets of points.

\begin{figure}[tb]
\begin{center}
\includegraphics[page=2,width=\textwidth]{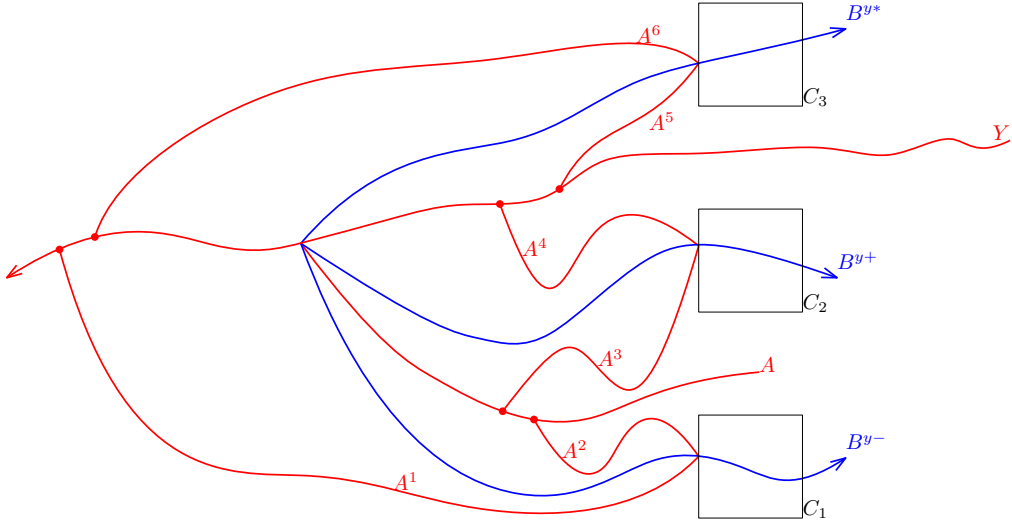}
\end{center}
\caption{Boxes $C_1,C_2,C_3$ and paths $A^1,\dots,A^6$.}
\label{fig:a1a6}
\end{figure}

We know that three paths depart form $(y,Y_y)$ in $\W$,
$B^{y-} \leqslant B^{y+}\leqslant Y \leqslant B^{y*}$, and
$B^{y+}_t-B^{y-}_t=m$.
Consider three boxes $C_1, C_2, C_3$ respectively centered at
$B^{y-}_{y+t},B^{y+}_{y+t},B^{y*}_{y+t}$ having height equal to $2\varepsilon$ and width given by
$2\varepsilon_1\leqslant2\varepsilon$, where $\varepsilon_1$ is such that each path crosses
the respective rectangle horizontally.
We say that a curve $B$ \emph{crosses the box $[a,b]\times[c,d]$ horizontally} if $c<B_x<d$ for $a \le x \le b$.
By reducing $\varepsilon$ and $\varepsilon_1$ if necessary, we can assume that such boxes do not intersect the paths $A$ and $Y$.

We now construct boxes $D_1,D_2,D_3$ shown in Fig.~\ref{fig:continuity}.
Recall that dual paths start from every point in the $xu$-plane, the set of dual paths is complete and they do not cross the primal paths.
So we can take dual paths $A^1,\dots,A^6$ as follows.
Paths $A_1$ and $A_2$ start at $(y+t-\varepsilon_1,B^{y-}_{y+t-\varepsilon_1})$, $A^1$ stays below $B^{y-}$, and $A^2$ stays above it.
Paths $A_3$ and $A_4$ start at $(y+t-\varepsilon_1,B^{y+}_{y+t-\varepsilon_1})$, $A^3$ stays below $B^{y+}$, and $A^4$ stays above it.
Paths $A_5$ and $A_6$ start at $(y+t-\varepsilon_1,B^{y*}_{y+t-\varepsilon_1})$, $A^5$ stays below $B^{y*}$, and $A^6$ stays above it.
Since the point $(y,Y_y)$ is of type $(0,3)$, and the dual path $A$ coalesces with $Y$ at $(y,Y_y)$, these paths must coalesce with either of them at some time different from $y$. Since dual paths cannot cross primal paths, $A^1$ and $A^6$ coalesce with $Y$ at some point to the left of $y$, $A^2$ and $A^3$ coalesce with $A$ at some point to the right of $y$, and $A^4$ and $A^5$ coalesce with $Y$ at some point to the right of $y$.
See Fig.~\ref{fig:a1a6}.

\begin{figure}[tb]
\centering
\includegraphics[width=.7\textwidth]{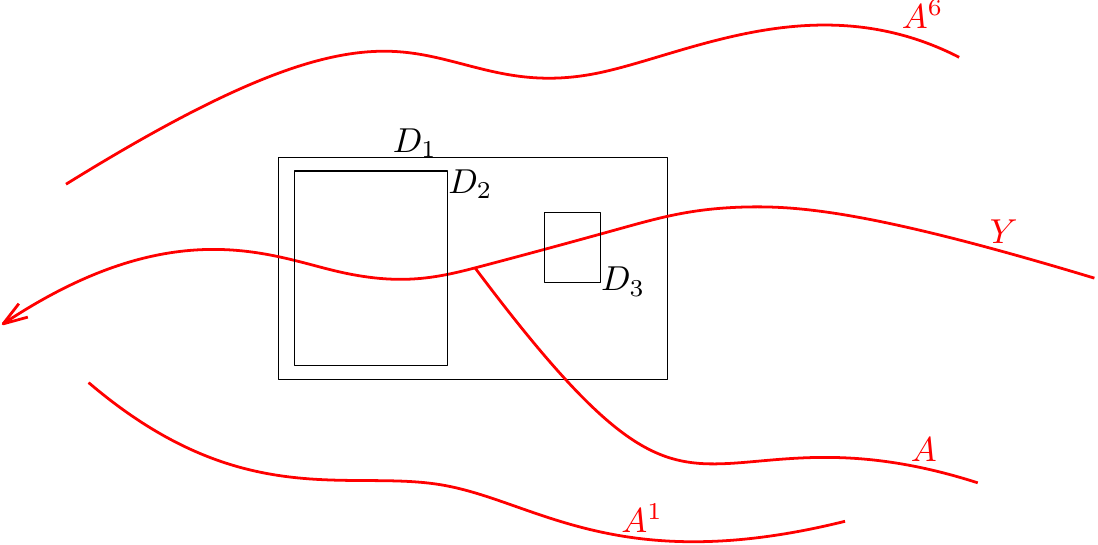}
\caption{Boxes $D_1,D_2,D_3$ between $A^1$ and $A^6$, all crossed horizontally by $Y$. Box $D_2$ is to the left of $y$, and box $D_3$ is between $A$ and $A^6$.}
\label{fig:d1d2d3}
\end{figure}

Since these curves are all continuous, we can fit a box $D_1$ centered at $(y,Y_y)$
between $A^1$ and $A^6$, and which is crossed horizontally by $Y$.
Then we pick any box $D_2$ contained in the left half of $D_1$ so that $D_2$ is crossed horizontally by $Y$.
Finally, since $Y_x > A_x$ for $x>y$, we can pick a box $D_3$ in the right half of $D_1$ so that $D_3$ is crossed horizontally by $Y$ and stays between $A$ and $A^6$.
See Fig.~\ref{fig:d1d2d3}.

The fact that primal paths cannot cross dual paths implies that all paths starting in $D_1$ will be bounced by $Y$, $A$ and $A^1,\dots,A^6$, which in turn implies that boxes $D_1,D_2,D_3$ have the desired properties shown in Fig.~\ref{fig:continuity}.

Finally, take two primal paths starting at $(-\varepsilon,Y_{-\varepsilon})$, one above $Y$ and one below $Y$. Since the origin is of type $(0,1)$, these paths are away from $0$ at $x=0$, so they coalesce at a strictly positive time and it is possible to fit a box $D_4$ centered at the origin which stays between them.
See Fig.~\ref{fig:boxd4}.
Since dual paths cannot cross primal paths, $D_4$ also has the desired properties and this concludes the proof of Lemma~\ref{lemma:continuity}.

\begin{figure}[tb]
\centering
\includegraphics[width=.5\textwidth]{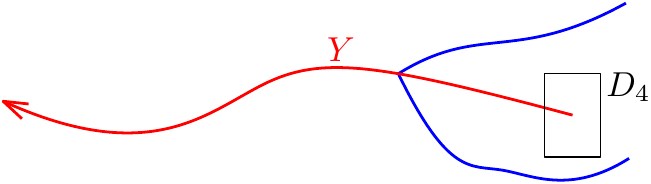}
\caption{Two primal paths starting form the same point of $Y$ near the origin, and box $D_4$ between these two paths.}
\label{fig:boxd4}
\end{figure}


\section*{Acknowledgments}
We thank L.~R.~Fontes for motivating and inspiring discussions.

\baselineskip 11pt
\parskip 0pt
\setstretch{1}
\small
\bibliographystyle{bib/leo}


\end{document}